\definecolor{chapnumberbg}{RGB}{0,0,0}
\definecolor{chapname}{RGB}{0,0,0}
\definecolor{kleur}{RGB}{0,0,0}
\xpatchcmd\swappedhead{~}{.~}{}{}
\newcommand{\Z}{\mathbb{Z}}
\newcommand{\ux}{\underline{x}}
\newcommand{\uy}{\underline{y}}
\newcommand{\uz}{\underline{z}}
\newcommand{\mS}{\mathbb{S}}
\newcommand{\mcM}{\mathcal{M}}
\newcommand{\mcH}{\mathcal{H}}
\newcommand{\up}{\underline{\partial}}
\newcommand{\mE}{\mathbb{E}}
\newcommand{\Alg}{\operatorname{Alg}}
\definecolor{shadecolor}{gray}{0.9}
\newcommand{\U}{\operatorname{\mathsf{U}}} 
\newcommand{\R}{\mathbb{R}}
\newcommand{\C}{\mathbb{C}}
\newlist{mylist}{enumerate}{2}
\renewcommand\labelenumi{\normalfont(\roman{enumi})}
\renewcommand\theenumi\labelenumi
\renewcommand{\phi}{\varphi}
\definecolor{ua}{rgb}{0,0,0}
\newtheorem*{theorem*}{Theorem}
\newtheorem*{stelling*}{Stelling}
\newtheoremstyle{mystyle}
{6pt}
{6pt}
{\normalfont}
{0pt}
{\scshape\bfseries}
{.}
{4pt}
{}
\theoremstyle{mystyle}
\newtheorem{definition}{Definition}[section]
\newtheorem*{definition*}{Definition}
\newtheorem{remark}[definition]{Remark}
\renewcommand{\subset}{\subseteq}
\newtheoremstyle{mystyle3}
{6pt}
{6pt}
{\sffamily}
{0pt}
{\scshape\bfseries}
{.}
{4pt}
{}
\theoremstyle{mystyle3}
\newtheoremstyle{mystyle2}
{6pt}
{6pt}
{\itshape}
{0pt}
{\scshape\bfseries}
{.}
{4pt}
{}
\theoremstyle{mystyle2}
\newtheorem{theorem}[definition]{Theorem}
\newtheorem{lemma}[definition]{Lemma}
\newtheorem{corollary}[definition]{Corollary}
\definecolor{headtitle}{RGB}{0,0,0} 
\colorlet{captionhead}{headtitle}
\def\Hrulefill{\leavevmode\leaders\hrule width 1ex height 0.7ex depth -0.6ex\hfill\kern0pt}
\numberwithin{equation}{section}
\title{{A Hermitian refinement of symplectic Clifford analysis}\label{hermC}}
\author{David Eelbode\footnote{UAnwerpen, Middelheimlaan 1, 2020 Antwerp  Belgium \url{david.eelbode@uantwerpen.be}
	} \qquad  Guner Muarem\footnote{UAnwerpen, Middelheimlaan 1, 2020 Antwerp  Belgium	\url{guner.muarem@uantwerpen.be}}}
\begin{document}

\maketitle

\begin{abstract}
In this paper we develop the Hermitian refinement of symplectic Clifford analysis, by introducing a complex structure $\mathbb{J}$ on the canonical symplectic manifold $(\R^{2n},\omega_0)$. This gives rise to two symplectic Dirac operators $D_s$ and $D_t$ (in the sense of Habermann \cite{MR2252919}), leading to a $\mathfrak{u}(n)$-invariant system of equations on $\R^{2n}$. We discuss the solution space for this system, culminating in a Fischer decomposition for the space of polynomials on $\R^{2n}$ with values in the symplectic spinors. To make this decomposition explicit, we will construct the associated embedding factors using a transvector algebra.
\end{abstract}

 \section{Introduction} 

\noindent This paper is to be situated in the framework of so-called symplectic (Clifford) analysis, a function theory which focuses on solutions for the symplectic Dirac operator $D_s$ (see \cite{MR2252919} for basic information). This is a first-order differential operator which generalises the classical Dirac operator\footnote{In itself already a generalisation of the operator introduced by P.A.M. Dirac to describe the behaviour of electrons.} on $\R^n$ in the following sense: instead of the typical spin group invariance characterising the standard Dirac operator, which is {\em orthogonal} in nature because $\mathsf{Spin}(n)$ is a double cover of the orthogonal group $\mathsf{SO}(n)$, one will get {\em symplectic} invariance. It is still a Dirac operator though, because the symplectic Dirac operator $D_s$ acts on functions taking values in the symplectic generalisation of the spinor space $\mathbb{S}$ from the orthogonal setting, and because its solutions can be related to solutions for the Laplace operator (this point will also be treated in this paper, although we immediately note that it is more subtle than the orthogonal case, where one simply has that the square of the classical Dirac operator gives the Laplace operator). In this paper, we will consider a symmetry reduction from the symplectic Lie group $\mathsf{Sp}(2n)$ to its subgroup $\mathsf{U}(n)$. To do this, we will introduce a complex structure $\mathbb{J}$ on the symplectic manifold $(\R^{2n},\omega_0)$. Just like in the orthogonal case, introducing a complex structure $\mathbb{J}$ on $\R^{2n}$ will then lead to a refinement of symplectic Clifford analysis. 
\par
Before introducing this symplectic analogue, we will shortly review the basics of the Hermitian refinement of (classical) Clifford analysis, breaking down the symmetry of the orthogonal group $\mathsf{SO}(2n)$ to the unitary group $\mathsf{U}(n)$. This leads to a system of two Hermitian Dirac operators which are invariant under the unitary group $\mathsf{U}(n)$ (more precisely: its double cover, realised as a subgroup of the spin group). It is well-known that the algebraic structure generated by these Hermitian Dirac operators and their (Fischer) dual operators gives rise to the Lie superalgebra $\mathfrak{osp}(2|2)$, which then leads to a Howe dual pair $\mathsf{U}(n)\times \mathfrak{osp}(2|2)$ with an associated Fischer decomposition for the space of spinor-valued complex harmonics (see \cite{BDES}, or Theorem \ref{HFD}).
\par
The symplectic version of this last theorem will be the main result of this paper. To do so we will define the operator $D_t$, related to the symplectic Dirac operator $D_s$ under a suitable complex structure. Next we describe the unitary invariance of this pair of operators $(D_s,D_t)$ and determine the algebra generated by them, leading to the dual pair $\mathsf{U}(n) \times \mathfrak{su}(1,2)$. Finally, we will use this dual pair to obtain a Fischer decomposition for the space of spinor-valued bi-harmonics on the space $\R^{2n} \cong \C^n$. 
\section{Hermitian Clifford analysis}\label{Hermitian}
\noindent 
The key ingredient in the transition from the classical case to Hermitean refinement is a complex structure $\mathbb{J}$, given by
\[ \mathbb{J}=\begin{pmatrix}
0& I_n\\ -I_n&0
\end{pmatrix} \in \mathsf{SO}(2n)\ . \]
On the standard orthonormal basis $\{e_1,\ldots,e_{2n}\}$ for $\R^{2n}$ this complex structure $\mathbb{J}$ thus acts as follows: $\mathbb{J}(e_k)=-e_{k+n}$ and $\mathbb{J}(e_{k+n})=e_{k}$, for all $1 \leq k \leq n$. Note that the coordinates will be referred to as $(x_1,\ldots,x_n,y_1\ldots,y_n)\in\mathbb{R}^{2n}$ in this paper. The (classical) Dirac operator $\up_x$ and its (Fischer) dual $\ux$ on $\R^{2n}$ are given by
\begin{align*}
\ux = \sum_{k=1}^n \big( e_kx_k+e_{n+k}y_k \big) \quad \mbox{and} \quad
\up_x =\sum_{k=1}^n \big( e_k\partial_{x_k}+e_{n+k}\partial_{y_k} \big)\ ,
\end{align*}
and using the complex structure $\mathbb{J}$ one can define a twisted (or rotated) version:	
\begin{align*}
\ux^{\mathbb{J}}:&=\mathbb{J}[\ux]=\sum_{k=1}^n(y_ke_{2k-1}-x_ke_{2k})\\
{\partial^{\mathbb{J}}_{\ux}}:&=\mathbb{J}[\up_x]=\sum_{k=1}^n(e_{2k-1}\partial_{y_k} - e_{2k}\partial_{x_k})\ .
\end{align*}
Note that $\mathbb{J}$ acts on the basis vectors $e_k$ here. In the definition below, $\mathbb{S}$ stands for the space of Dirac spinors on $\R^{2n}$ (i.e. the sum of positive and negative Weyl spinors), typically realised using a minimal left ideal in the complex Clifford algebra $\mathbb{C}_{2n}$. 
\begin{definition}
	An $\mathbb{S}$-valued function $f$ on $\R^{2n}$ satisfying the equations $\up_x f = {\partial^{\mathbb{J}}_{\ux}} f=0$ is called {\em Hermitian monogenic} (or $h$-monogenic). One further defines the space 
	\begin{align*}
	\mathcal{M}_k^h(\R^{2n},\mathbb{S}):=\mathcal{P}_k(\R^{2n},\mathbb{S})\cap \big(\ker(\up_x)\cap\ker(\partial^{\mathbb{J}}_{\ux})\big)\ , 
	\end{align*}
	containing $k$-homogeneous polynomials which are $h$-monogenic.
\end{definition}
There is another way to introduce a pair of Hermitian Dirac operators, using the Witt basis for $\R^{2n}\cong \C^n$:
\begin{definition}
	The Witt basis vectors for $\C^n$ are defined as 
	\begin{align*}
	\mathfrak{f}_k = \frac{1}{2}\left(e_k - i e_{n+k}\right) \quad \mbox{and} \quad \mathfrak{f}_k^{\dagger} = -\frac{1}{2}\left(e_k + i e_{n+k}\right)\ .
	\end{align*}
\end{definition}
\noindent
The Hermitian Dirac operators and their adjoints are then introduced as follows:
\begin{align}
(\up_z,\up_z^\dagger) = \left(\sum_{k=1}^n\mathfrak{f}_k^{\dagger}\partial_{z_k}, \sum_{k=1}^n\mathfrak{f}_k\partial_{\overline{z}_k}\right) \quad \mbox{and} \quad (\uz,\uz^\dagger) = \left(\sum_{k=1}^n\mathfrak{f}_k z_k,\sum_{k=1}^n\mathfrak{f}_k^{\dagger}\overline{z}_k\right)\ .
\end{align}\label{dz}
\noindent 
The Dirac operator on $\R^{2n}$ can then be written as $\up_x = 2(\up_z^{\dagger}-\up_z)$, and similarly for its adjoint $\ux = \uz - \uz^\dagger$. This shows why Hermitian Clifford analysis is a {\em refinement} of classical Clifford analysis: $h$-monogenics always belong to the kernel of the Dirac operator $\up_x$, but not necessarily the other way around. 
\begin{remark}
	The operators $\up_z$ and $\up_z^{\dagger}$ can be used to factorise the Laplacian $\Delta$ on $\R^{2n}$, since $\Delta = 4(\up_z^{\dagger}\up_z+\up_z\up_z^{\dagger}) = 4\{\up_z^{\dagger},\up_z\}$. This implies that every $h$-monogenic function is harmonic.
\end{remark}
The Witt basis can be used to define a simple model for the spinor space $\mathbb{S}$ in the orthogonal setting. Indeed, one has that $\mathbb{S} = \C_{2n}I$ where $I$ is the primitive idempotent $I=I_1\cdots I_n$ with $I_k=\mathfrak{f}_k\mathfrak{f}_k^{\dagger}$ for all $1 \leq k \leq n$. Due to the fact that $\mathfrak{f}_kI=0$, the spinor space $\mathbb{S}$ reduces to the action of the (complex) Grassmann algebra $\Lambda^{\dagger}_n$ spanned by the elements $\{\mathfrak{f}^{\dagger}_k\}_{k=1}^n$, which means that we can write $\mathbb{S} = \Lambda^{\dagger}_nI$. This implies that the spinor space decomposes into `homogeneous parts' as
$$
\mathbb{S} = \bigoplus_{r=0}^n\mathbb{S}_{(r)}=\bigoplus_{r=0}^n (\Lambda^{\dagger}_n)^{r}I$$ where $(\Lambda^{\dagger}_n)^{r}$ consists of all $r$-vectors in $\Lambda^{\dagger}_n$. This decomposition is certainly not spin-invariant, but it does carry a (left) multiplicative $\mathfrak{u}(n)$-action, see e.g. \cite{BDES}. The invariant subspaces $\mathbb{S}_{(r)}$ can be characterised as eigenspaces of the so-called {\em spin-Euler operator} $\beta = n - \sum_k I_k$, measuring the `degree of homogeneity' $r$. 
\par
Now that we have introduced the Hermitian Dirac operators $\up_z$ and $\up_z^{\dagger}$ and discussed the decomposition of the spinor space $\mathbb{S}$ into $\mathfrak{u}(n)$-irreducible representations, we focus on the algebra generated by these Dirac operators and their (Fischer) duals. Recall that in the orthogonal case the operators $\up_x$ and $\ux$ give rise to the Lie superalgebra $\mathfrak{osp}(1|2)$, whereas in the Hermitian setting one obtains the algebra $\mathfrak{osp}(2|2)$. For further details, such as the definitions for the generators and their (anti-)commutation relations, we refer to \cite{BDES}. There, one can also find the following theorem:
\begin{theorem}[Hermitian Howe duality]\label{hermitianhowe}
	The space $\mathcal{P}(\R^{2n},\mathbb{S})$ of spinor-valued polynomials on $\R^{2n}$ can be decomposed in a multiplicity-free way, using the Howe dual pair $\U(n) \times \mathfrak{osp}(2|2)$. 
\end{theorem}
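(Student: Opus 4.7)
The plan is to realise $\U(n) \times \mathfrak{osp}(2|2)$ as a reductive dual pair acting on $\mathcal{P}(\R^{2n},\mathbb{S})$ and then invoke the abstract Howe-duality mechanism. I would proceed in three main steps, followed by a separation-of-types check.

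First, I would explicitly identify the generators of $\mathfrak{osp}(2|2)$ inside the Weyl--Clifford algebra on $\R^{2n}$. The four odd generators are the Hermitian Dirac operators $\up_z, \up_z^\dagger$ together with their Fischer duals $\uz, \uz^\dagger$. Taking (anti-)commutators produces the even part: the brackets $\{\up_z,\uz\}$ and $\{\up_z^\dagger,\uz^\dagger\}$ yield the two complex Euler operators $\mathbb{E}_z = \sum z_k \partial_{z_k}$ and $\mathbb{E}_{\overline{z}} = \sum \overline{z}_k \partial_{\overline{z}_k}$ shifted by the spin-Euler operator $\beta$, while suitable combinations of $\{\up_z,\uz^\dagger\}$ and $\{\up_z^\dagger,\uz\}$ give the Laplacian-type operator $\Delta = 4\sum \partial_{z_k}\partial_{\overline{z}_k}$ and its Fischer dual $|z|^2 = \sum z_k \overline{z}_k$. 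A routine matching of the resulting relations against the standard presentation of $\mathfrak{osp}(2|2) \cong \mathfrak{sl}(2)\oplus \mathfrak{gl}(1) \oplus \text{(odd)}$ recalled in \cite{BDES} closes this step.

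Second, I would verify that all eight generators commute with the $\U(n)$-action on $\mathcal{P}(\R^{2n},\mathbb{S})$: the bosonic ones are classical $\U(n)$-invariants, while $\up_z$ and $\uz^\dagger$ (respectively $\up_z^\dagger$ and $\uz$) are $\U(n)$-equivariant because, read through the Witt basis, they realise the canonical contraction between the defining representation and its dual. Once both inclusions are in place, the mutual-centraliser property in the operator algebra on $\mathcal{P}(\R^{2n},\mathbb{S})$ follows from the super-analogue of the first fundamental theorem of invariant theory for $\U(n)$, in the form used in \cite{BDES}. With this in hand, I would identify the joint lowest-weight vectors of $\mathfrak{osp}(2|2)$ as precisely the spaces $\mathcal{M}_{a,b,r}^h \subset \mathcal{P}_{a,b}(\R^{2n})\otimes \mathbb{S}_{(r)}$ of Hermitian monogenics of bi-degree $(a,b)$ and spinor homogeneity $r$, since these are the common kernel of the lowering operators $\up_z$ and $\up_z^\dagger$. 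Acting by the raising operators $\uz$ and $\uz^\dagger$ then sweeps out the corresponding $\mathfrak{osp}(2|2)$-module $L_{a,b,r}$, yielding
\[
\mathcal{P}(\R^{2n},\mathbb{S}) \;\cong\; \bigoplus_{(a,b,r)} \mathcal{M}_{a,b,r}^h \otimes L_{a,b,r}\ .
\]

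The main obstacle is verifying that this decomposition is genuinely multiplicity free, i.e.\ that distinct triples $(a,b,r)$ deliver distinct joint $(\U(n), \mathfrak{osp}(2|2))$-types. On the $\U(n)$-side this amounts to showing that each $\mathcal{M}_{a,b,r}^h$ is irreducible with a highest weight of the form $(a+r, r,\ldots, r, r-b)$, which can be obtained by branching $\mathsf{Sp}(2n)$-irreducibles through $\U(n)$ and keeping track of the bi-grading. On the $\mathfrak{osp}(2|2)$-side, the numerical data $(a,b,r)$ are recovered from the eigenvalues of $\mathbb{E}_z$, $\mathbb{E}_{\overline{z}}$ and $\beta$ on the lowest-weight vector, so no two triples can yield isomorphic $L_{a,b,r}$. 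A final character/Poincaré-series comparison against $\prod_k (1-t z_k)^{-1}(1-t \overline{z}_k)^{-1}\cdot \dim \mathbb{S}$ then confirms that no summands are missed.
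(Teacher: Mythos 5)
The paper does not prove this theorem itself: it is quoted directly from \cite{BDES} (the text reads ``There, one can also find the following theorem''), so there is no paper-internal proof to compare against. Your outline does track the standard Howe-duality argument that is carried out in that reference (and in the analogous orthogonal and symplectic cases): exhibit $\mathfrak{osp}(2|2)$ inside the Weyl--Clifford algebra via the four odd operators $\up_z, \up_z^\dagger, \uz, \uz^\dagger$, show the even part closes on $\mathbb{E}_z$, $\mathbb{E}_{\overline z}$, $\beta$, $\Delta$, $|z|^2$, establish the $\U(n)$-commutant property via classical (super-)invariant theory, identify the joint lowest-weight spaces with the Hermitian monogenics $\mathcal{M}_{a,b}^{(r)}$, and read off multiplicity-freeness from the fact that $(a,b,r)$ is recovered from the Cartan eigenvalues on both sides. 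One concrete slip: the proposed highest weight $(a+r,\,r,\ldots,r,\,r-b)$ for $\mathcal{M}_{a,b,r}^h$ does not match the half-integral weights that $\mathbb{S}_{(r)}$-valued objects carry in Hermitian Clifford analysis; the correct weight picks up $\pm\tfrac{1}{2}$ shifts (as it does in the symplectic analogue introduced later in this paper, where $\mathcal{M}_{a,b}^{(r)}$ has weight $(a-\tfrac12,-\tfrac12,\ldots,-b-\tfrac12,-r-\tfrac12)$). This is a bookkeeping error rather than a structural one, since what multiplicity-freeness actually needs is injectivity of the map from $(a,b,r)$ to the pair of $\U(n)$- and $\mathfrak{osp}(2|2)$-types, not the explicit weight formula.
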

\noindent 
Using the identification $(z_1,\dots,z_n)\in \C^n\leftrightarrow(x_1,\dots,x_n,y_1,\dots,y_n)\in\R^{2n}$, where we have put $z_k = x_k + iy_k$, we get a canonical identification 
\begin{align*}
P(\ux,\uy)\in\mathcal{P}(\R^{2n},\C)\ \leftrightarrow\ P(\uz,\overline{\uz})\in \mathcal{P}(\C^n,\C)\ .
\end{align*}
Further structure can then be imposed on this space of polynomials: 
\begin{definition}
	Let $a,b$ be two non-negative integers. A polynomial $P(\uz,\overline{\uz})$ in $\mathcal{P}(\C^n,\C)$ is said to be \textit{bi-homogeneous} of degree $(a,b)$ if
	\begin{align*}
	P(\lambda\uz,\overline{\lambda}\overline{\uz}) = \lambda^a\overline{\lambda}^bP(\uz,\overline{\uz})    
	\end{align*}
	for all $\uz\in\C^n$ and $\lambda\in \C$. We denote the space of the \textit{bi-homogeneous} polynomials of degree $(a,b)$ by $\mathcal{P}_{a,b}(\C^n,\C)$ or $\mathcal{P}_{a,b}(\R^{2n},\C)$.
\end{definition}
\begin{remark}
	Recall that in the classical case, the degree of homogeneity of a polynomial $P_k\in\mathcal{P}_k(\R^{n},\C)$ was measured by the Euler operator $\mathbb{E}=\sum_{j=1}^n x_j\partial_{x_j}$. In the Hermitian setting, one defines the so-called \textit{$h$-Euler operators} as 
	\begin{align*}
	\mE_z=\sum_{j=1}^nz_j\partial_{z_j}\quad \text{ and } \quad \mE_z^\dagger =\sum_{j=1}^n\overline{z}_j\partial_{\overline{z}_j}\ ,
	\end{align*}
	the eigenvalues of which are precisely the bi-degree parameters $a$ and $b$. 
\end{remark}
\begin{definition}
	We define $\mathcal{H}_{a,b}(\mathbb{R}^{2n},\C)$ as the space of \textit{harmonic polynomials on $\mathbb{R}^{2n}$ of bi-degree} $(a,b)$. This means that $\mathcal{H}_{a,b}(\mathbb{R}^{2n},\C) := \mathcal{P}_{a,b}(\mathbb{R}^{2n},\C)\cap \ker(\Delta)$.
\end{definition}
\noindent
Denoting the space of $k$-harmonic polynomials on $\R^{2n}$ by means of $\mathcal{H}_k(\mathbb{R}^{2n},\C)$, we then have that 
\begin{align*}
\mathcal{H}_k(\mathbb{R}^{2n},\mathbb{C}) = \bigoplus_{k=a+b}\mathcal{H}_{a,b}(\mathbb{C}^n)\ ,
\end{align*}
which expresses a irreducible representation for $\mathsf{SO}(2n)$ at the left-hand side (harmonic polynomials of degree $k$) as a direct sum of irreducible representations for the subgroup $\mathsf{U}(n)$ at the right-hand side. A key result in understanding the Hermitean refinement of the Fischer decomposition is the following theorem:
\begin{theorem}\label{HFD}
	Suppose $H_{a,b}^{(r)}(\uz,\overline{\uz}) \in \mathcal{H}_{a,b}(\C^{n},\C)\otimes \mathbb{S}_{(r)}$ with  $0<r<n$ and $a,b > 0$, then $H_{a,b}^{(r)}(\uz,\overline{\uz})$ can be decomposed as
	\begin{align*}
	H_{a,b}^{(r)}(\uz,\overline{\uz}) &= M_{a,b}^{(r)}(\uz,\overline{\uz}) + \uz M_{a-1,b}^{(r+1)}(\uz,\overline{\uz}) + \uz^{\dagger}M_{a,b-1}^{(r-1)}(\uz,\overline{\uz}) \\ &+\left((a+r-1)\uz\uz^{\dagger}-(b+m-r-1)\uz^{\dagger}\uz\right)M_{a-1,b-1}^{(r)}(\uz,\overline{\uz}), 
	\end{align*}
	where all $M_{a,b}^{(r)}(\uz,\overline{\uz}) \in \ker(\up_z)\cap\ker(\up^{\dagger}_z)$, i.e. are $h$-monogenic. 
\end{theorem}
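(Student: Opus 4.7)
The plan is to prove the decomposition in three stages: first verify that each summand on the right-hand side lies in $\mathcal{H}_{a,b}(\C^n,\C)\otimes \mathbb{S}_{(r)}$; second, establish the sum is direct via Fischer orthogonality; third, conclude equality by a dimension count appealing to Theorem \ref{hermitianhowe}.

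First I would check that each of the four summands is harmonic of bi-degree $(a,b)$ and spinor degree $r$. The bi-degree and spinor bookkeeping is immediate from the eigenvalue actions of $\mE_z$, $\mE_z^\dagger$, and $\beta$, together with the observation that $\uz$ raises the holomorphic degree and lowers the spinor degree by one (and $\uz^\dagger$ performs the mirror operation). Harmonicity of the first three summands then follows at once from $\Delta = 4\{\up_z^\dagger,\up_z\}$, the $h$-monogenicity of the $M$'s, and the $\mathfrak{osp}(2|2)$-relations $\{\up_z,\uz^\dagger\} = \{\up_z^\dagger,\uz\} = 0$, $\{\up_z,\uz\} = \beta + \mE_z$, and $\{\up_z^\dagger,\uz^\dagger\} = (n-\beta) + \mE_z^\dagger$. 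For the mixed term, these same relations show that $\Delta(\uz\uz^\dagger M)$ and $\Delta(\uz^\dagger\uz M)$ are each a scalar multiple of $M$, with scalars $4(n-r+b-1)$ and $4(a+r-1)$ respectively; imposing vanishing of the combined $\Delta$-image then forces the unique (up to an overall scalar) coefficients $(a+r-1)$ and $-(b+n-r-1)$ recorded in the theorem, and incidentally reveals that the $m$ printed in the statement should read $n$.

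Next I would show that the four summands are pairwise Fischer-orthogonal. With respect to the Fischer inner product, $\uz$ and $\up_z^\dagger$ are mutual adjoints, as are $\uz^\dagger$ and $\up_z$. Hence cross-pairings such as $\langle \uz A,\uz^\dagger B\rangle_F$ reduce, by adjunction and the $\mathfrak{osp}(2|2)$-relations above, to scalar multiples of pairings between $h$-monogenics of different bi-degrees or spinor degrees, which vanish by degree considerations. The same device handles the cross-pairings involving the mixed summand.

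Finally, that the direct sum exhausts $\mathcal{H}_{a,b}(\C^n,\C)\otimes\mathbb{S}_{(r)}$ follows from Theorem \ref{hermitianhowe}: the multiplicity-free $\U(n)\times\mathfrak{osp}(2|2)$-decomposition of $\mathcal{P}(\R^{2n},\mathbb{S})$ fixes the $\U(n)$-isotypic multiplicities inside every harmonic bi-degree, and the four triples $(a,b,r)$, $(a-1,b,r+1)$, $(a,b-1,r-1)$, $(a-1,b-1,r)$ are exactly those whose $h$-monogenic seeds can be raised into $\mathcal{H}_{a,b}(\C^n,\C)\otimes\mathbb{S}_{(r)}$ by the raising generators of $\mathfrak{osp}(2|2)$. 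The main obstacle is the mixed-term calculation in stage one: one must carefully track how the Witt vectors interact with the $h$-Euler operators and the spin-Euler $\beta$, and it is this bookkeeping which pins down the integer coefficients appearing in the theorem.
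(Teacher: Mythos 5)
This theorem appears in the paper's review section on Hermitian Clifford analysis, and the paper does not prove it: it is quoted from \cite{BDES} (see the sentence ``There, one can also find the following theorem'' preceding Theorem \ref{hermitianhowe}, and Remark 2.11 pointing to \cite{BDES} for details). So there is no in-paper proof to compare against; I can only assess your reconstruction on its own merits.

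Your argument is sound and reproduces the standard route. The bookkeeping in stage one is right: $\uz$ sends $(a,b;r)\mapsto(a{+}1,b;r{-}1)$ and $\uz^\dagger$ sends $(a,b;r)\mapsto(a,b{+}1;r{+}1)$, and using $\Delta = 4\{\up_z^\dagger,\up_z\}$ together with $\{\up_z,\uz^\dagger\}=\{\up_z^\dagger,\uz\}=0$, $\{\up_z,\uz\}=\beta+\mE_z$, $\{\up_z^\dagger,\uz^\dagger\}=(n-\beta)+\mE_z^\dagger$, one indeed finds for $M\in\mcM_{a-1,b-1}^{(r)}$ that $\Delta(\uz\uz^\dagger M)=4(n+b-r-1)M$ and $\Delta(\uz^\dagger\uz M)=4(a+r-1)M$, which pins down the coefficients $(a+r-1)$ and $-(b+n-r-1)$ in the mixed term. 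You correctly flag that the symbol $m$ in the printed statement is an undefined typo and should read $n$ (in \cite{BDES} the role of $m$ is played by the complex dimension). Your orthogonality argument in stage two works regardless of which of the two natural Fischer-adjoint conventions one adopts (under the one more commonly used, $\uz$ is adjoint to $\up_z$ and $\uz^\dagger$ to $\up_z^\dagger$, the opposite pairing from the one you wrote; but the cross-pairings collapse to zero either way, so nothing breaks). The only place I would ask for more is stage three: as stated in this paper, Theorem \ref{hermitianhowe} only asserts multiplicity-freeness, so it does not by itself tell you that \emph{exactly} the four triples $(a,b,r)$, $(a-1,b,r+1)$, $(a,b-1,r-1)$, $(a-1,b-1,r)$ exhaust the $\U(n)$-content of $\mcH_{a,b}\otimes\mathbb{S}_{(r)}$. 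To close that gap one needs either the precise form of the $\U(n)\times\mathfrak{osp}(2|2)$ joint spectrum from \cite{BDES}, or an independent $\mathfrak{u}(n)$-tensor-product / dimension computation for $\mcH_{a,b}\otimes\mathbb{S}_{(r)}$ (the analogue of what the present paper carries out in Theorem \ref{tensorP} for the symplectic case). This is a real step, though a standard one, and is the reason the restrictions $0<r<n$, $a,b>0$ appear in the statement.
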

\begin{remark}
	There are a few trivial cases for the decomposition above, which explains the restrictions on the parameters $a, b$ and $r$ (see \cite{BDES} for more details). 
\end{remark}


\section{Hermitian symplectic Dirac operators}\label{hsdsec}
Just like in the previous section, we start by introducing the rotated (or `twisted') symplectic Dirac operator and its dual, using the complex structure $\mathbb{J}$ from above. For that purpose, we recall their definitions (see \cite{MR2252919}): 
\begin{align*}
D_s = \sum_{j = 1}^n \big(iq_j \partial_{y_j} - \partial_{q_j}\partial_{x_j}\big) \quad \mbox{and} \quad X_s = \sum_{j = 1}^n \big(y_j \partial_{q_j} + iq_jx_j\big)\ .
\end{align*}
These operators act on symplectic spinor-valued functions $f(x_1,\ldots,y_n)$, whereby the symplectic spinor space can be realised as the Schwartz space $\mathcal{S}(\R^n,\C)$ of rapidly decreasing functions on $\R^n$. These then correspond to the smooth vectors of the oscillator representation for the metaplectic group $\mathsf{Mp}(2n)$, see for instance \cite{MR2252919, kos}. We will use the variable $\underline{q} = (q_1,\ldots,q_n) \in \R^n$ as the argument for these functions, and use the symbol $\mS^{\infty}$ to refer to this spinor space (where the infinity symbol stresses the fact that the underlying metaplectic representation is infinite-dimensional). 
\begin{remark}
	We denote the even and odd part of $\mathbb{S}^\infty$, containing even and odd functions respectively, by means of $\mS^{\infty}_+$ and $\mS^{\infty}_-$. These spaces then define the (positive and negative) spinor representations for the symplectic group $\mathsf{Sp}(2n)$, or its associated Lie algebra. The highest weights of these representations are given by 
	\begin{align*}
	\mS^{\infty}_+ \leftrightarrow \left(-\frac{1}{2},\dots,-\frac{1}{2}\right)_{\mathfrak{sp}(2n)}\quad\text{ and }\quad \mS^{\infty}_- \leftrightarrow \left(-\frac{1}{2},\dots,-\frac{1}{2},-\frac{3}{2}\right)_{\mathfrak{sp}(2n)}\ ,
	\end{align*}
	and they provide examples of so-called \textit{completely pointed modules}, see \cite{BL1, BL2}.
\end{remark}
\par
Now that we have defined the values of the functions that will be considered in this paper, we can use the complex structure $\mathbb{J}$, acting on the column vector $(q_j,-i\partial_{q_j})^T$, to arrive at the following operators: 
\begin{definition}
	The twisted symplectic Dirac operator $D_t$ and its (Fischer) dual $X_t$ are given by: 
	\begin{align*}
	D_t := \mathbb{J}[D_s] = \sum_{j=1}^n \big(iq_j\partial_{x_j} + \partial_{y_j}\partial_{q_j}\big) \quad \mbox{and} \quad X_t := \mathbb{J}[X_s]  &=\sum_{j=1}^n \big(x_j\partial_{q_j} - iy_jq_j\big)\ .
	\end{align*}
\end{definition}
\begin{remark}
	The operator $D_t$ was often ignored up to this point in the context of symplectic Clifford analysis for $D_s$, mostly because the former operator does not have the $\mathfrak{sp}(2n)$-invariance properties of the latter. The operator $D_t$ is indeed {\em not} invariant under the realisation for $\mathfrak{sp}(2n)$ which appears in the setting of $D_s$ (see the first set of operators in the lemma below). However, when switching to {\em another} realisation for this symplectic Lie algebra (the second set of operators in the lemma below), $D_t$ {\em will} be invariant. As is to be expected, $D_s$ will in turn not be invariant under this new realisation. We would thus argue that both operators are symplectically invariant, just like Habermann mentions in \cite{MR2252919}, under the caveat that one has to specify for which realisation. Recall that this invariance amounts to saying that the generators below commute with the symplectic Dirac operators, so e.g. $[D_s,X_{jk}] = [D_t,\tilde{Y}_{jk}] = 0$. 
\end{remark}
\begin{lemma}\label{unitary_Lie_operators}
	We have the following two realisations of the Lie algebra $\mathfrak{sp}(2n)$ on the space $\mathcal{P}(\R^{2n},\C)\otimes \mathbb{S}^{\infty}$ of symplectic spinor-valued polynomials
	\begin{align*}
	\begin{cases}
	X_{jk}=x_j\partial_{x_k}-y_k\partial_{y_j} - (q_k\partial_{q_j}+\frac{1}{2}\delta_{jk})
	& 1\leq j\leq k\leq n
	\\Y_{jk}=x_j\partial_{y_k}+x_k\partial_{y_j} +i\partial_{q_j}\partial_{q_k}
	& j<k=1,\dots,n
	\\ Z_{jk}=y_j\partial_{x_k}+y_k\partial_{x_j} + i q_jq_k
	& j<k=1,\dots,n
	\\Y_{jj}=x_j \partial_{y_j} +\frac{i}{2}\partial_{q_j}^2
	& j=1,\dots,n
	\\Z_{jj}=y_j\partial_{x_j}+\frac{i}{2} q_j^2
	&  j=1,\dots,n
	\end{cases}
	\end{align*} 
	and 
	\begin{align*}
	\begin{cases}
	\tilde{X}_{jk}=x_j\partial_{x_k}-y_k\partial_{y_j} + q_k\partial_{q_j}+\frac{1}{2}\delta_{jk}
	& 1\leq j\leq k\leq n
	\\\tilde{Y}_{jk}=x_j\partial_{y_k}+x_k\partial_{y_j} -iq_jq_k
	& j<k=1,\dots,n
	\\ \tilde{Z}_{jk}=y_j\partial_{x_k}+y_k\partial_{x_j} -i \partial_{q_j}\partial_{q_k}
	& j<k=1,\dots,n
	\\\tilde{Y}_{jj}=x_j \partial_{y_j} -\frac{i}{2}q_j^2
	& j=1,\dots,n
	\\\tilde{Z}_{jj}=y_j\partial_{x_j}-\frac{i}{2} \partial_{q_j}^2& j=1,\dots,n
	\end{cases}
	\end{align*}
	The symplectic Dirac operator $D_s$ is invariant under the first realisation, the operator $D_t$ under the second.
\end{lemma}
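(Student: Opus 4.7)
The plan is to establish both assertions by direct commutator computation, guided by the observation that each generator decomposes as a sum of a \emph{spatial} term (a polynomial differential operator in the $(x_j,y_j)$) and a \emph{fibre} term (a differential operator in the $q_j$). Because the two pieces act on disjoint sets of variables, they mutually commute, and consequently every commutator calculation below, both for the Lie-algebra structure and for the invariance claims, splits into two independent computations that can be carried out separately and then added.

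For the first assertion, that each set of operators realises $\mathfrak{sp}(2n)$, I would identify the spatial part with the standard realisation on polynomials on $T^*\mathbb{R}^n=\mathbb{R}^{2n}$ in position--momentum coordinates, and the fibre part with (a sign-conjugated form of) the metaplectic Segal--Shale--Weil representation on $\mathcal{S}(\mathbb{R}^n,\mathbb{C})$ generated by the symbols $q_jq_k$, $\partial_{q_j}\partial_{q_k}$, and $q_j\partial_{q_k}+\tfrac12\delta_{jk}$; see \cite{MR2252919,kos}. A short direct check of the brackets on the generators $X_{jk}^{\text{fib}}=-q_k\partial_{q_j}-\tfrac12\delta_{jk}$, $Y_{jk}^{\text{fib}}=i\partial_{q_j}\partial_{q_k}$, $Z_{jk}^{\text{fib}}=iq_jq_k$ shows that the fibre map already satisfies the $\mathfrak{sp}(2n)$-relations, and combining two commuting copies on the tensor product immediately yields the full realisation. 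The second realisation differs from the first by an involutive automorphism of the fibre generators, essentially swapping $iq_j$ with $\partial_{q_j}$ (and preserving the Heisenberg, hence metaplectic, relations), so it too furnishes a copy of $\mathfrak{sp}(2n)$.

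For the invariance claims, I would verify $[D_s,X_{jk}]=[D_s,Y_{jk}]=[D_s,Z_{jk}]=0$ (with their diagonal counterparts) by explicit commutator computations. To illustrate the cancellation pattern on $X_{jk}$: the contribution $[D_s,x_j\partial_{x_k}]$ produces $-\partial_{q_j}\partial_{x_k}$, the contribution $[D_s,-y_k\partial_{y_j}]$ produces $-iq_k\partial_{y_j}$, and the fibre commutator $[D_s,-q_k\partial_{q_j}]$ produces exactly $+\partial_{q_j}\partial_{x_k}+iq_k\partial_{y_j}$, cancelling the previous two; the central shift $-\tfrac12\delta_{jk}$ is inert. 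Analogous cancellations handle $Y_{jk}$, where the spatial contribution $-\partial_{q_j}\partial_{y_k}-\partial_{q_k}\partial_{y_j}$ is annihilated by $[D_s,i\partial_{q_j}\partial_{q_k}]=+\partial_{q_j}\partial_{y_k}+\partial_{q_k}\partial_{y_j}$, and $Z_{jk}$, where $+iq_j\partial_{x_k}+iq_k\partial_{x_j}$ is cancelled by $[D_s,iq_jq_k]=-iq_j\partial_{x_k}-iq_k\partial_{x_j}$. The invariance of $D_t$ under the second realisation is then verified by a mechanically identical calculation; alternatively it follows from the first case by invoking $D_t=\mathbb{J}[D_s]$ together with the fibre involution carrying the first realisation onto the second.

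The principal obstacle is purely combinatorial bookkeeping: the factors of $i$, the central shifts $\tfrac12\delta_{jk}$ arising from the metaplectic anomaly, and the opposite signs of the fibre terms between the two realisations must be tracked carefully, and the off-diagonal generators each spawn several commutator contributions that must cancel exactly. To keep the write-up manageable I would compute one representative generator of each type ($X_{jk}$, $Y_{jk}$, $Z_{jk}$, plus the three diagonal variants) in full detail and merely indicate that the remaining cases go through \emph{mutatis mutandis}.
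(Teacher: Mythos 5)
Your proposal is correct and follows exactly the route the paper indicates: the paper's entire proof consists of the sentence ``This follows from straightforward commutation relations,'' and you supply precisely those computations (the spatial/fibre split, the sample cancellations for $X_{jk}$, $Y_{jk}$, $Z_{jk}$, and the metaplectic identification of the fibre part are all consistent with what direct verification yields). Your additional structural remark, that the spatial and fibre pieces are two \emph{commuting} Lie-algebra homomorphisms of $\mathfrak{sp}(2n)$ so that their sum is again one, is a correct and useful way to organize the verification, though the paper does not spell it out.
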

\begin{proof}
	This follows from straightforward commutation relations.
\end{proof}
\begin{remark}
	The capital letters used to denote these operators are referring to the letters used in a typical matrix realisation (see e.g.\ \cite{fulton2013representation}), so that the Lie algebra homomorphism becomes immediately clear. 
\end{remark}
We will now perform a symmetry reduction, such that the operators $D_s$ and $D_t$ both become invariant under \textit{one and the same} group (or associated Lie algebra). To see that this must be the unitary group, we will first switch to matrices and characterise all symplectic matrices which commute with the complex structure $\mathbb{J}$. 
\begin{lemma} 
	We have that $\mathsf{Sp}_{\mathbb{J}}(2n):= \{M\in\mathsf{Sp}(2n)\mid M\mathbb{J} = \mathbb{J}M\}$ 
	defines a realisation for the unitary Lie group $\mathsf{U}(n)$.
\end{lemma}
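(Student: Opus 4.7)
The plan is to show that the commutation condition $M\mathbb{J}=\mathbb{J}M$ together with the symplectic relation $M^T\mathbb{J}M=\mathbb{J}$ force $M$ to be orthogonal, and then to exhibit an explicit group isomorphism with $\mathsf{U}(n)$ using the block structure imposed by $\mathbb{J}$.

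First, I would write $M\in \mathsf{Sp}(2n)$ in block form $M=\begin{pmatrix} A & B \\ C & D\end{pmatrix}$ with $A,B,C,D\in\mat(n,\R)$, and impose $M\mathbb{J}=\mathbb{J}M$ for the explicit $\mathbb{J}$ given. A direct computation of both sides yields $A=D$ and $C=-B$, so
\[
M=\begin{pmatrix} A & B \\ -B & A\end{pmatrix}.
\]
Next, I would combine this with the symplectic condition. Writing the symplectic form matrix as $\mathbb{J}$ itself, one has $M^T\mathbb{J}M=\mathbb{J}$; after substituting $M\mathbb{J}=\mathbb{J}M$ this becomes $M^TM\,\mathbb{J}=\mathbb{J}$, i.e.\ $M^TM=I_{2n}$. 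Thus $M\in \mathsf{O}(2n)\cap \mathsf{Sp}(2n)$, and in block form the orthogonality condition amounts to the two real relations $A^TA+B^TB=I_n$ and $A^TB-B^TA=0$.

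The main step is then the identification with $\mathsf{U}(n)$. Define
\[
\phi\colon \mathsf{Sp}_{\mathbb{J}}(2n)\longrightarrow \GL(n,\C)\colon \begin{pmatrix} A & B \\ -B & A\end{pmatrix}\longmapsto A+iB.
\]
I would verify that the two real relations above are precisely equivalent to $(A+iB)^*(A+iB)=I_n$, so $\phi$ takes values in $\mathsf{U}(n)$. A short matrix multiplication shows $\phi$ is a group homomorphism, namely that the block-matrix product of two elements of the above shape corresponds to the ordinary product of the associated complex matrices $A+iB$. Injectivity is clear from $\phi^{-1}(I_n)=I_{2n}$, and surjectivity follows by sending any $U=A+iB\in \mathsf{U}(n)$ back to the block matrix, which by reversing the equivalence lies in $\mathsf{O}(2n)$, commutes with $\mathbb{J}$, and hence lies in $\mathsf{Sp}_{\mathbb{J}}(2n)$ (the symplectic condition being recovered from orthogonality plus commutation).

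The only mildly delicate point is bookkeeping the equivalence between the two real block relations $A^TA+B^TB=I_n$, $A^TB=B^TA$ and the single complex relation $(A+iB)^*(A+iB)=I_n$; once that is in place everything reduces to straightforward block computations. I do not expect any serious obstacle, as this is essentially the standard realisation $\mathsf{Sp}(2n,\R)\cap \mathsf{O}(2n)\cong \mathsf{U}(n)$, the novelty here being merely that the intersection is expressed via commutation with $\mathbb{J}$.
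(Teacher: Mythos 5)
Your proof is correct and follows essentially the same route as the paper: block decomposition, extraction of $A=D$, $C=-B$ from the commutation condition, substitution into the symplectic relation to obtain $M^TM=I_{2n}$, and the complexification map $M\mapsto A+iB$. You merely spell out the homomorphism, injectivity, and surjectivity verifications that the paper compresses into an appeal to the ``well-known group isomorphism.''
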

\begin{proof}
	In order to see this, assume that $M$ is of the block-form $\left(\begin{smallmatrix}
	A	&	B\\
	C	&	D
	\end{smallmatrix}\right)$, where $A,B,C$ and $D$ are $(n\times n)-$matrices. There are two conditions to be met here: $M^T \mathbb{J} M = \mathbb{J}$ (being symplectic) and $M\mathbb{J} = \mathbb{J}M$. The latter tells us that $A=D$ and $B=-C$, and plugging the second condition in the first immediately yields that $M^T M = \textup{Id}$ gives the identity matrix. This is precisely the condition required to be a unitary matrix, with the well-known group isomorphism $\Phi: \mathsf{Sp}_{\mathbb{J}}(2n,\mathbb{R})\to \mathsf{U}(n):M\mapsto A+iB$.
\end{proof}
On the level of operators acting on functions, we get the following: 
\begin{lemma}\label{u-realisation}
	The Lie algebra $\mathfrak{u}(n)$ can be realised in terms of the differential operators acting on the space $\mathcal{P}(\R^{2n},\C)\otimes\mathbb{S}^{\infty}$:
	\begin{align*}
	\begin{cases}
	A_{jk} = y_j\partial_{x_k} -x_k\partial_{y_j} + y_k\partial_{x_j} - x_j\partial_{y_k}+ i (q_jq_k -\partial_{q_j}\partial_{q_k})&\quad 1\leq j<k\leq n
	\\B_{jj}=y_j\partial_{x_j}-x_j\partial_{y_j}+\frac{i}{2}\left(q_j^2-\partial_{q_j}^2\right)
	&\quad 1\leq j\leq n
	\\ C_{jk}=x_j\partial_{x_k} - x_k\partial_{x_j}+ y_j\partial_{y_k} - y_k\partial_{y_j} + q_j\partial_{q_k} - q_k\partial_{q_j}
	&\quad 1\leq j<k\leq n
	\end{cases}
	\end{align*}
\end{lemma}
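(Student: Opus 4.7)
The plan is to produce each of the operators $A_{jk}, B_{jj}, C_{jk}$ as the image of a basis element of $\mathfrak{u}(n) \subset \mathfrak{sp}(2n,\R)$ under the natural-times-metaplectic representation on $\mathcal{P}(\R^{2n},\C) \otimes \mathbb{S}^{\infty}$, using the Lie algebra isomorphism $d\Phi$ coming from the preceding lemma. Because this tensor representation is a Lie algebra homomorphism, closure of the proposed operators under commutators and the identification of their brackets with the $\mathfrak{u}(n)$ structure constants will follow automatically.

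First I would fix a matrix-level basis. By the preceding lemma, $\mathfrak{sp}_{\mathbb{J}}(2n,\R) = \left\{\left(\begin{smallmatrix} A & B \\ -B & A \end{smallmatrix}\right) : A+A^T=0,\; B=B^T\right\}$ is sent by $d\Phi$ to $\mathfrak{u}(n)$ via $(A,B) \mapsto A+iB$. A convenient basis consists of three types: $(E_{jk}-E_{kj},\,0)$ for $j<k$ (real skew), $(0,\,E_{jk}+E_{kj})$ for $j<k$ (imaginary symmetric off-diagonal), and $(0,\,E_{jj})$ for $1 \leq j \leq n$ (imaginary diagonal), totalling $\binom{n}{2}+\binom{n}{2}+n = n^2 = \dim\mathfrak{u}(n)$.

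Next I would lift each basis element factor-by-factor. On the polynomial factor $\mathcal{P}(\R^{2n},\C)$, the natural action assigns to $M \in \mathfrak{sp}(2n,\R)$ the vector field $-\sum_{i,l}M_{il}\xi_l\partial_{\xi_i}$ in the coordinates $\xi = (x_1,\dots,x_n,y_1,\dots,y_n)$; evaluating on the three basis types reproduces exactly the first-order $x,y$-portions of $C_{jk}$, $A_{jk}$ and $B_{jj}$ respectively. On the spinor factor $\mathbb{S}^{\infty} = \mathcal{S}(\R^n)$, the derived metaplectic representation (in the normalisation of \cite{MR2252919}) sends an antisymmetric $A$-block to a Weyl-symmetrised combination of $q_i\partial_{q_l}$, and a symmetric $B$-block to $\frac{i}{2}\sum_{i,l}B_{il}(q_iq_l - \partial_{q_i}\partial_{q_l})$; on the three basis types this produces precisely the $q_j\partial_{q_k}-q_k\partial_{q_j}$, $i(q_jq_k-\partial_{q_j}\partial_{q_k})$ and $\frac{i}{2}(q_j^2-\partial_{q_j}^2)$ terms appearing in the statement.

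The main obstacle is the bookkeeping of normalisations — the factors of $\frac{i}{2}$ and the Weyl ordering on the spinor side — which must be chosen consistently with the conventions underlying Lemma \ref{unitary_Lie_operators}. A good sanity check is that $A_{jk}$ and $B_{jj}$ admit identical expressions in both realisations of $\mathfrak{sp}(2n)$ listed there, namely $A_{jk} = Z_{jk}-Y_{jk} = \tilde{Z}_{jk}-\tilde{Y}_{jk}$ and $B_{jj} = Z_{jj}-Y_{jj} = \tilde{Z}_{jj}-\tilde{Y}_{jj}$, so they manifestly commute with both $D_s$ and $D_t$; a short additional commutator computation then confirms the same for $C_{jk}$. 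Alternatively, in the spirit of the author's proof of Lemma \ref{unitary_Lie_operators}, the entire statement reduces to mechanical commutator calculations using $[q_i,\partial_{q_l}] = -\delta_{il}$ and $[x_i,\partial_{x_l}] = -\delta_{il}$ to verify directly that the $n^2$ prescribed operators close under brackets according to the $\mathfrak{u}(n)$ structure constants.
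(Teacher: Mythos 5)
Your proposal is correct and follows essentially the first route the paper sketches: identify $\mathfrak{u}(n) \cong \mathfrak{sp}_{\mathbb{J}}(2n)$ at the matrix level via the preceding lemma, fix the basis $(A,B)$ with $A$ skew and $B$ symmetric, and push through the natural action on $\mathcal{P}(\R^{2n},\C)$ tensored with the derived metaplectic action on $\mathbb{S}^{\infty}$, which is precisely what reading the linear combinations off Lemma~\ref{unitary_Lie_operators} achieves. Your additional observation that $A_{jk} = Z_{jk}-Y_{jk} = \tilde{Z}_{jk}-\tilde{Y}_{jk}$ and $B_{jj} = Z_{jj}-Y_{jj} = \tilde{Z}_{jj}-\tilde{Y}_{jj}$ (while $C_{jk}$ is \emph{not} the same combination in both realisations, requiring the separate check you flag) is a useful consistency cross-check the paper does not spell out, but it is in the same spirit as the paper's alternative suggestion of locating the combinations that commute with $\sum_j B_{jj}$.
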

\begin{proof}
	One can use the previous lemma to obtain the matrix realisation, and use lemma \ref{unitary_Lie_operators} to obtain the corresponding differential operators. Alternatively, one can also employ the fact that the operator $\sum_j B_{jj}$ corresponds to the symplectic structure $\mathbb{J}$ from the previous lemma, and use this to obtain the required linear combinations which commute with $\sum_j B_{jj}$. 
\end{proof}
\begin{corollary}
	The operators $D_s$ and $D_t$ are invariant differential operators with respect to the realisation for $\mathfrak{u}(n)$ from above.
\end{corollary}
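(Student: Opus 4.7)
My plan is to reduce the corollary to Lemma \ref{unitary_Lie_operators}: I would show that each of the $\mathfrak{u}(n)$-generators $A_{jk}, B_{jj}, C_{jk}$ from Lemma \ref{u-realisation} can be written as a linear combination of generators from the first $\mathfrak{sp}(2n)$-realisation (which immediately yields $D_s$-invariance), and, separately, from the second realisation (yielding $D_t$-invariance). Since each $\mathfrak{sp}(2n)$-realisation commutes entrywise with its designated symplectic Dirac operator, both commutation relations drop out at once.

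For $B_{jj}$ and $A_{jk}$ the two expressions turn out to be literally the same $Y$--$Z$ combinations, and a line of bookkeeping should give
\begin{align*}
B_{jj} = Z_{jj} - Y_{jj} = \tilde{Z}_{jj} - \tilde{Y}_{jj} \qquad \text{and} \qquad A_{jk} = Z_{jk} - Y_{jk} = \tilde{Z}_{jk} - \tilde{Y}_{jk},
\end{align*}
from which $[D_s, A_{jk}] = [D_s, B_{jj}] = [D_t, A_{jk}] = [D_t, B_{jj}] = 0$ follows directly. The operator $C_{jk}$ plays the role of the antisymmetric part of $\mathfrak{gl}(n) \subset \mathfrak{sp}(2n)$; extending the formula for $X_{jk}$ in the first realisation from $j \leq k$ to arbitrary indices by the same expression, I would check that $C_{jk} = X_{jk} - X_{kj}$, giving $[D_s, C_{jk}] = 0$ immediately.

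The step I expect to require the most care, though it remains a routine commutator check, is the commutation of $C_{jk}$ with $D_t$. The analogous difference $\tilde{X}_{jk} - \tilde{X}_{kj}$ in the second realisation equals $C_{jk} - 2(q_j\partial_{q_k} - q_k\partial_{q_j})$, so $C_{jk}$ is not produced on the nose from the tilded generators alone. To finish, I would verify directly that the pure $\underline{q}$-rotation $q_j\partial_{q_k} - q_k\partial_{q_j}$ commutes with $D_t = \sum_l (iq_l\partial_{x_l} + \partial_{y_l}\partial_{q_l})$; this is a short computation in which the $\delta_{jl}$ and $\delta_{kl}$ produced when commuting the rotation past $q_l$ and $\partial_{q_l}$ pair up antisymmetrically in $(j,k)$ and cancel separately in each of the two summands of $D_t$. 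Combining this with $[D_t, \tilde{X}_{jk} - \tilde{X}_{kj}] = 0$ from Lemma \ref{unitary_Lie_operators} then yields $[D_t, C_{jk}] = 0$, and completes the verification for all three families of $\mathfrak{u}(n)$-generators.
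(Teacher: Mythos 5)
Your strategy of reusing Lemma \ref{unitary_Lie_operators} rather than redoing the commutators from scratch is clean, and the identities $B_{jj}=Z_{jj}-Y_{jj}=\tilde Z_{jj}-\tilde Y_{jj}$, $A_{jk}=Z_{jk}-Y_{jk}=\tilde Z_{jk}-\tilde Y_{jk}$, and $C_{jk}=X_{jk}-X_{kj}$ (with the natural extension of $X_{jk}$ to all $j,k$) all check out; they handle every case except the one you flag as delicate. However, that last step contains a genuine error. The operator $R:=q_j\partial_{q_k}-q_k\partial_{q_j}$ does \emph{not} commute with $D_t$. A direct computation gives
\begin{align*}
[D_t,R]
= -iq_j\partial_{x_k}+iq_k\partial_{x_j}+\partial_{y_j}\partial_{q_k}-\partial_{y_k}\partial_{q_j}\ ,
\end{align*}
which is nonzero. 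The $\delta$'s you describe do not cancel within each summand of $D_t$: commuting $q_l$ past $\partial_{q_k}$ and $\partial_{q_j}$ produces $-\delta_{lk}q_j$ and $+\delta_{lj}q_k$, which after summing over $l$ yield $-q_j\partial_{x_k}+q_k\partial_{x_j}$, a genuine surviving term (and similarly for the $\partial_{y_l}\partial_{q_l}$ part). The antisymmetry of $R$ in $(j,k)$ does not force its commutator with a fixed vector operator to vanish.

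The deeper reason your argument goes astray is that the formula for $\tilde X_{jk}$ printed in Lemma \ref{unitary_Lie_operators} appears to contain a typo: with $\tilde X_{jk}=x_j\partial_{x_k}-y_k\partial_{y_j}+q_k\partial_{q_j}+\frac{1}{2}\delta_{jk}$ one finds $[D_t,\tilde X_{jk}]\neq 0$ for $j<k$, contradicting the lemma, whereas replacing $q_k\partial_{q_j}$ by $q_j\partial_{q_k}$ restores $[D_t,\tilde X_{jk}]=0$. With that correction one has $C_{jk}=\tilde X_{jk}-\tilde X_{kj}$ on the nose, so no residual $q$-rotation $R$ appears and no auxiliary commutator is needed. (The two nonvanishing commutators in your decomposition, $[D_t,\tilde X_{jk}-\tilde X_{kj}]$ and $2[D_t,R]$, do in fact cancel each other, which is why your final answer is correct despite the intermediate claim being false.) The paper's own proof simply computes $[D_s,\cdot]$ and $[D_t,\cdot]$ directly against the generators of Lemma \ref{u-realisation}, which sidesteps the issue entirely; your reduction is a nicer route, but for the $C_{jk}$ case you should either verify $[D_t,C_{jk}]=0$ directly, or note and fix the apparent index misprint in $\tilde X_{jk}$.
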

\begin{proof}
	It is straightforward to check that $D_s$ and $D_t$ commute with all the operators from Lemma \ref{u-realisation}.
\end{proof}
This means that we can now refine the $\mathfrak{sp}(2n)$-invariant equation $D_sf=0$, by introducing a system of two $\mathfrak{u}(n)$-invariant equations: $D_sf = $ and $D_t f = 0$, for an $\mathbb{S}^\infty$-valued function $f$ (note that we will mostly stick to polynomials in this paper). On the analogy with the classical case, we have the following:
\begin{definition} A function which is a solution for both $D_s$ and $D_t$ is called a {\em Hermitian symplectic monogenic} (or $h$-symplectic monogenic for short). 
\end{definition}
\par 
These two symplectic Dirac operators $(D_s,D_t)$ will serve as the analogue of the operators $(\up_x,\up_x^{\mathbb{J}})$. One can develop the analogy with the classical situation even further by introducing the symplectic analogues of the operators $\up_z$ and $\up_z^{\dagger}$, see \eqref{dz}.
Recall that these Hermitian Dirac operators $\up_z$ and $\up_z^{\dagger}$ were introduced using the Witt basis vectors. In the symplectic setting, we will do something similar by introducing the operators $D_z$ and $D_z^{\dagger}$, known in the literature as the \textit{symplectic Dolbeault operators} \cite{dol}. These are defined using the lowering operator $L_j=q_j+\partial_{q_j}$ and raising operator $R_j=q_j-\partial_{q_j}$ $(1\leq j\leq n)$ acting on symplectic spinors. 
\begin{definition}
	The symplectic Dolbeault operators are defined as follows: 
	\begin{align*}
	D_z :=\frac{1}{2}(D_s+iD_t)&=-\sum_{j=1}^nL_j\partial_{z_j}= -\langle \underline{L},\up_z\rangle \\
	D_z^{\dagger} :=\frac{1}{2}(D_s-iD_t)&=\sum_{j=1}^nR_j\partial_{{\overline{z}}_j}=\langle \underline{R},\up_{\overline{z}}\rangle \\
	X_z :=\frac{1}{2}(X_s+iX_t)&=\frac{i}{2}\sum_{j=1}^nL_j{\overline{z}_j}=\frac{i}{2}\langle \underline{L},\underline{\overline{z}}\rangle\\
	X_z^{\dagger} :=\frac{1}{2}(X_s-iX_t)&=\frac{i}{2}\sum_{j=1}^nR_j{{z}_j}=\frac{i}{2}\langle \underline{R},\uz\rangle\ ,
	\end{align*}
	where we used the short-hand notation notation $\langle\cdot,\cdot\rangle$ for the inner product.
\end{definition}
\par
The general philosophy in passing from the orthogonal setting to the symplectic one often amounts to replacing an anti-commutator $\{\cdot,\cdot\}$ by a commutator $[\cdot,\cdot]$. As a simple example we mention the definition for the Clifford algebra versus the Weyl algebra. A similar thing happens for the factorisation of the Laplace operator. Whereas the formula $\Delta =4 \{\up_z,\up_z^\dagger\}$ expresses $\Delta$ as an anti-commutator, a simple computation leads to the commutator relation $[D_s,D_t]=-i\Delta$ in the symplectic setting. An immediate consequence is the following: 
\begin{lemma}
	We have the inclusion $\ker(D_s) \cap \ker(D_t)\subset \ker(\Delta)$.
\end{lemma}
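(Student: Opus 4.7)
The plan is to use the commutator identity $[D_s,D_t]=-i\Delta$ that was recorded in the paragraph immediately preceding the lemma. Since this identity has already been asserted, the lemma itself is really just the observation that the kernel of a commutator of two operators contains the intersection of their individual kernels, suitably rescaled.

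First, I would take an arbitrary $f \in \ker(D_s)\cap\ker(D_t)$. By definition this means $D_s f = 0$ and $D_t f = 0$. Applying $D_t$ to the first equation and $D_s$ to the second yields $D_t D_s f = 0$ and $D_s D_t f = 0$, so that
\begin{equation*}
[D_s, D_t] f \;=\; D_s D_t f - D_t D_s f \;=\; 0 .
\end{equation*}
Substituting the commutator relation $[D_s,D_t]=-i\Delta$ gives $-i\Delta f = 0$, and dividing by $-i$ (or equivalently multiplying by $i$) yields $\Delta f = 0$. Hence $f \in \ker(\Delta)$, which is exactly the claimed inclusion.

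The only real content in the argument is the commutator identity itself, so the natural ``obstacle'' would be to justify $[D_s,D_t]=-i\Delta$ by direct expansion from the definitions of $D_s$ and $D_t$ in terms of the coordinates $(x_j,y_j,q_j)$. Since that identity is already stated as part of the preceding discussion, I would not re-derive it inside the proof of the lemma; I would simply cite it and present the short deduction above. If one wished to make the note self-contained, one would expand
\begin{equation*}
D_s D_t - D_t D_s \;=\; \sum_{j,k}\bigl[iq_j\partial_{y_j}-\partial_{q_j}\partial_{x_j},\, iq_k\partial_{x_k}+\partial_{y_k}\partial_{q_k}\bigr]
\end{equation*}
and observe that all cross-terms cancel by relabelling the summation indices, while the $\delta_{jk}$ contributions produced by $[\partial_{q_j},q_k]=\delta_{jk}$ assemble precisely to $-i\sum_j(\partial_{x_j}^2+\partial_{y_j}^2) = -i\Delta$.
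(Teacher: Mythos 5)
Your argument is correct and is exactly the reasoning the paper has in mind: the lemma is stated as an immediate consequence of the identity $[D_s,D_t]=-i\Delta$, and your short deduction ($f\in\ker D_s\cap\ker D_t$ forces $[D_s,D_t]f=0$, hence $\Delta f=0$) is the intended proof.
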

\noindent
This shows that also Hermitian symplectic Clifford analysis is a \textit{refinement} of harmonic analysis for the Laplace operator $\Delta$ on $\R^{2n}$. The dual operators $X_s$ and $X_t$ obviously satisfy a similar relation $
[X_s,X_t]=-ir^2$, 
where $r^2$ is the multiplication operator $r^2 = \sum_j(x_j^2 + y_j^2) = \sum_j|z_j|^2$. Putting everything together, this leads to a third realisation for the Lie algebra $\mathfrak{sl}(2)$ in our framework, next to the algebras $\operatorname{Alg}(D_s,X_s)$ and
$\operatorname{Alg}(D_t,X_t)$. The former was proved in \cite{DBHS} and used to obtain the symplectic Fischer decomposition, the latter is based on similar calculations. Indeed, it is well-known that $\operatorname{Alg}(\Delta,r^2) \cong \mathfrak{sl}(2)$, after a suitable normalisation of these operators. To summarise, we then have the following: 
$$\begin{tabular}{|c||c|c|}
\hline
\bfseries{Complex data}  & \bfseries{Orthogonal setting} & \bfseries{Symplectic setting} \\
\hline	\hline
none & $-\Delta=\up_x^2$ & $\Delta\neq D_s^2$ \\
\hline
complex structure $\mathbb{J}$ & \makecell{Real viewpoint\\$\Delta=\up_x^2=\mathbb{J}[\up_x]^2$
	\\Complex viewpoint
	\\$\Delta=4\{\up_z,\up_z^{\dagger}\}$} & \makecell{Real viewpoint\\$\Delta=[D_s,D_t]$
	\\Complex viewpoint
	\\$\Delta=[D_z,D_z^{\dagger}]$} \\
\hline
\end{tabular}$$
We will now describe a wide class of examples of $h$-symplectic monogenics, by making the link with holomorphic functions in several variables.
Let $f(\uz): \mathbb{C}^n\to \mathbb{C}$ be a complex-valued function in several complex variables which is of the class $\mathcal{C}^1(\Omega)$ (i.e.\ continuously differentiable). We say that $f$ is \textit{holomorphic in several variables} if $\partial_{\overline{z}_j}f(\uz)=0$ for all $1\leq j \leq n$, where $\partial_{\overline{z}_j}:=\frac{1}{2}(\partial_{x_j}+i\partial_{y_j})$ is the Cauchy-Riemann operator in the relevant variable. Note that we will often write functions $f(\uz)$ in terms of real variables, i.e. as $f(\ux,\uy)$. We can then prove the following: 
\begin{theorem}\label{habermannsolutions} Let $f(\ux,\uy)$ be a holomorphic function in several complex variables. Then the functions of the form  
	\begin{align*}
	F(\ux,\uy;\underline{q}) := f(\ux,\uy) e^{-\frac{1}{2}|\underline{q}|}
	\end{align*}
	are solutions of $D_s$ and $D_t$, i.e. symplectic $h$-monogenics.
\end{theorem}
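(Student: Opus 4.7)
The plan is to bypass $D_s$ and $D_t$ directly and work instead with the symplectic Dolbeault operators, since the relations $D_s = D_z + D_z^{\dagger}$ and $iD_t = D_z - D_z^{\dagger}$ imply $\ker(D_s)\cap\ker(D_t) = \ker(D_z)\cap\ker(D_z^{\dagger})$. It therefore suffices to show that $F(\ux,\uy;\underline{q}) = f(\ux,\uy)\,G(\underline{q})$, with $G(\underline{q}) := e^{-|\underline{q}|^2/2}$ the Gaussian (interpreting the exponent in the statement as $-\tfrac{1}{2}|\underline{q}|^2$, so that $G$ is the harmonic-oscillator ground state), lies in the joint kernel of these two first-order operators.

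The key observation for the $D_z$-side is that $G$ is annihilated by every lowering operator $L_j = q_j + \partial_{q_j}$, since $L_j G = q_jG + \partial_{q_j}G = q_jG - q_jG = 0$. Because $L_j$ acts only on the $\underline{q}$-variables, it commutes with multiplication by $f(\uz)$ and with the Wirtinger derivative $\partial_{z_j}$ in the $(\ux,\uy)$-variables. Using the factorisation $D_z = -\langle \underline{L}, \up_z\rangle$ from the definition of the Dolbeault operators, one computes
\begin{align*}
D_z F = -\sum_{j=1}^n L_j\,\partial_{z_j}\big(f(\uz)\,G(\underline{q})\big) = -\sum_{j=1}^n (\partial_{z_j} f)(\underline{q})\,(L_j G)(\underline{q}) = 0.
\end{align*}

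For $D_z^{\dagger}$ the holomorphicity hypothesis takes over: by assumption $\partial_{\overline{z}_j}f = 0$ for each $1\leq j\leq n$. The raising operator $R_j = q_j - \partial_{q_j}$ again involves only $\underline{q}$ and therefore commutes with $\partial_{\overline{z}_j}$, so using $D_z^{\dagger} = \langle \underline{R}, \up_{\overline{z}}\rangle$ one obtains
\begin{align*}
D_z^{\dagger} F = \sum_{j=1}^n R_j\,\partial_{\overline{z}_j}\big(f(\uz)\,G(\underline{q})\big) = \sum_{j=1}^n (R_j G)(\underline{q})\,(\partial_{\overline{z}_j} f)(\uz) = 0.
\end{align*}
Inverting the Dolbeault change of variables then yields $D_s F = (D_z + D_z^{\dagger})F = 0$ and $D_t F = -i(D_z - D_z^{\dagger})F = 0$, so $F$ is indeed an $h$-symplectic monogenic. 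I expect no real obstacle here: the argument packages two independent pieces of input — the Cauchy–Riemann equations for $f$ and the ground-state annihilation $L_j G = 0$ of the Gaussian — through the Dolbeault factorisation. The only point requiring mild bookkeeping is the separation-of-variables structure, i.e.\ checking that $L_j$ and $R_j$ genuinely commute through the derivatives in the $(\ux,\uy)$-variables so that no mixed terms survive.
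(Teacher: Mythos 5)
Your proof is correct, and it takes a genuinely different route from the paper's. The paper computes $D_sF$ and $D_tF$ directly in real coordinates: acting with $D_s$ on $F = f\,e^{-|\underline{q}|^2/2}$, the $-\partial_{q_k}\partial_{x_k}$ term produces $+q_k\partial_{x_k}$ against the Gaussian, which then combines with $iq_k\partial_{y_k}$ to give $D_sF = e^{-|\underline{q}|^2/2}\sum_k q_k(\partial_{x_k}+i\partial_{y_k})f$; holomorphicity kills this, and a parallel computation handles $D_t$. In that approach both the Gaussian structure and the Cauchy--Riemann equations intertwine in each of the two computations. Your route via the Dolbeault factorisation $D_z = -\langle\underline{L},\up_z\rangle$, $D_z^{\dagger} = \langle\underline{R},\up_{\overline{z}}\rangle$ cleanly decouples the two inputs: $D_zF = 0$ needs \emph{only} the ground-state annihilation $L_jG = 0$ and would hold for arbitrary (not necessarily holomorphic) $f$, while $D_z^{\dagger}F = 0$ needs \emph{only} $\partial_{\overline{z}_j}f = 0$ and would hold for arbitrary $G$. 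This separation makes the roles of the hypotheses transparent and also clarifies why the Gaussian is the natural spinor-part choice. (You also implicitly corrected the typo $e^{-\frac{1}{2}|\underline{q}|}$ to $e^{-\frac{1}{2}|\underline{q}|^2}$, which the paper's own proof uses; and the argument $(\underline{q})$ on $(\partial_{z_j}f)$ in your first display should read $(\uz)$, a harmless slip.)
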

\begin{proof}  
	Letting the symplectic Dirac operator $D_s$ act on $F(\ux,\uy;\underline{q})$ gives:
	\begin{align*}
	D_s\left(e^{-\frac{1}{2}|\underline{q}|^2}f(\ux,\uy)\right)
	&= e^{-\frac{1}{2}|\underline{q}|^2}\sum_k q_k(\partial_{x_k}+i\partial_{y_k})f(\ux,\uy)
	\end{align*}
	This is indeed equal to zero if $f(\uz) = f(\ux,\uy)$ is holomorphic in several variables. A similar calculation for $D_t$ completes the proof.
\end{proof}
\begin{remark}
	We note that Habermann \cite{MR2252919} studied the following system for the symplectic Dirac operators $D_s$ and $D_t$ on $\R^2$:
	\begin{align*}
	\begin{cases}
	(iq\partial_y-\partial_q\partial_x)f(x,y;q)&=0\\
	(iq\partial_x+\partial_q\partial_y)f(x,y;q)&=0
	\end{cases}
	\end{align*}
	and concluded that the solution are of the form $f=e^{-\frac{1}{2}q^2}H$ where $H$ is a holomorphic function. Proposition \ref{habermannsolutions} thus generalises these solutions to $\R^{2n}$. However, it turns out that there are much \textit{more solutions} when the dimension increases from 2 to $2n$ (with $n > 1$). If $n=3$ for instance, it is easily seen that 
	\begin{align*}
	F(\ux,\uy;\underline{q}) = e^{-\frac{1}{2} \left(q_1^2+q_2^2+q_3^2\right)} \left(2 \left(2 q_3^2-1\right) \left(x_2-i y_2\right)-4 q_2 q_3 \left(x_3-i y_3\right)\right)
	\end{align*}
	is indeed symplectic $h$-monogenic, although not of the holomorphic type. In what follows we will therefore describe the elements in $\ker(D_s) \cap \ker(D_t)$ more systematically. 
\end{remark}


\section{The symplectic Hermitian Howe duality}
To determine the Howe dual pair $\mathsf{U}(n) \times \mathfrak{g}$ which can be seen as the symplectic version of the dual pair $\mathsf{U}(n) \times \mathfrak{osp}(2|2)$ from Theorem \ref{hermitianhowe}, we start by describing the Lie algebra generated by the operators $D_s$ and $D_t$ and their (Fischer) duals $X_s$ and $X_t$. Note that we have already obtained the following three realisations for the Lie algebra $\mathfrak{sl}(2)$:
\begin{align*}
\operatorname{Alg}\{D_s,X_s,\mathbb{E}+n\} \cong \operatorname{Alg}\{D_t,X_t,\mathbb{E}+n\} \cong \Alg(\Delta,r^2,\mathbb{E}+n)\cong \mathfrak{sl}(2)\ .
\end{align*}
The three copies of $\mathfrak{sl}(2)$ do \textit{not} commute mutually however, because for instance
\begin{align*}[D_t,X_s] &= \sum_{j=1}^n \big( i(x_j\partial_{y_j}
-y_j\partial_{x_j})+\partial_{q_j}^2-q_j^2\big) \neq 0\ .
\end{align*}
The operator above will thus be required to close the Lie algebra, so let us give it a name: 
\begin{definition}
	The operator $\mathcal{O}$, acting on $\mathbb{S}^\infty$-valued functions $f(\ux,\uy;\underline{q})$ on $\R^{2n}$, is defined by means of 
	\begin{align*}
	\mathcal{O} := \sum_{j=1}^n \big( i(x_j\partial_{y_j}
	-y_j\partial_{x_j})+\partial_{q_j}^2-q_j^2\big)\ .
	\end{align*}
\end{definition}
It turns out that by adding the operator $\mathcal{O}$ to the operators from above in our $\mathfrak{sl}(2)$-triplets, we will indeed get an algebra of dimension 8 which closes under the commutator bracket $[\cdot,\cdot]$. Let us first introduce this Lie algebra in terms of matrices:
\begin{definition}
	The Lie algebra $\mathfrak{su}(1,2)$ is a (quasi-split) real form of the complex Lie algebra $\mathfrak{sl}(3)$, and is defined in terms of matrices as follows
	\begin{align*}
	\mathfrak{su}(1,2) = \left\{\begin{pmatrix}
	\alpha	&	\beta	&	ic \\
	\gamma	&	\overline{\alpha} - \alpha &	-\overline{\beta}\\
	id		&	-\overline{\gamma}	& - \overline{\alpha}.
	\end{pmatrix} \in \C^{3 \times 3}: c,d\in \R \text{ and } \alpha,\beta,\gamma\in\C\right\}
	\end{align*}
\end{definition}
A basis for this Lie algebra is given by the the following set of matrices:
\begin{align*}
\begin{matrix}
H_1	=	\begin{pmatrix}
1	&	0	&	0	\\
0	&	0	&	0	\\
0	&	0	&	-1
\end{pmatrix} &  X_1	=	\begin{pmatrix}
0	&	1	&	0	\\
0	&	0	&	-1	\\
0	&	0	&	0
\end{pmatrix} 
& X_2	=	\begin{pmatrix}
0	&	i	&	0	\\
0	&	0	&	i	\\
0	&	0	&	0
\end{pmatrix}
& X_3	=	\begin{pmatrix}
0	&	0	&	i	\\
0	&	0	&	0	\\
0	&	0	&	0
\end{pmatrix}\\
H_2	=	\begin{pmatrix}
i	&	0	&	0	\\
0	&	-2i	&	0	\\
0	&	0	&	i
\end{pmatrix} &
Y_1	=	\begin{pmatrix}
0	&	0	&	0	\\
1	&	0	&	0	\\
0	&	-1	&	0
\end{pmatrix}
& Y_2	=	\begin{pmatrix}
0	&	0	&	0	\\
i	&	0	&	0	\\
0	&	i	&	0
\end{pmatrix}
& Y_3	=	\begin{pmatrix}
0	&	0	&	0	\\
0	&	0	&	0	\\
i	&	0	&	0
\end{pmatrix}
\end{matrix}
\end{align*}
The commutators $[M_1,M_2]$ between two matrices $M_1$ and $M_2$ in $\mathfrak{su}(1,2)$ give rise to the table below (with $M_1$ in a row and $M_2$ in a column). 
\begin{table}[h!]
	\begin{center}
		\begin{tabular}{c|c c c c c c c c }
			$[\cdot,\cdot]$ & $H_1$ & $H_2$ & $X_1$ & $X_2$ & $X_3$ & $Y_1$  & $Y_2$  & $Y_3$ \\
			\hline 
			$H_1$ & $0$ & $0$ &  $X_1$& $X_2$ & $2X_3$  & $-Y_1$  & $-Y_2$  & $-2Y_3$  \\
			
			$H_2$ & 0 & 0 & $3X_2$ & $-3X_1$ & $0$ &$-3Y_2$ & $3Y_1$ & 0 \\
			
			$X_1$ &$-X_1$  & $-3X_2$ &0  & $2X_3$ & $0$ & $H_1$ &$H_2$  & $-Y_2$ \\
			
			$X_2$ &  $-X_2$& $3X_1$ & $-2X_3$ &$0$  & $0$ & $H_2$ & $-H_1$  &$-Y_1$  \\
			
			$X_3$ & $-2X_3$ & $0$  & $0$ & $0$ & $0$ & $-X_2$ & $-X_1$ & $-H_1$ \\
			
			$Y_1$ & $Y_1$ & $3Y_2$ & $-H_1$ & $-H_2$ & $X_2$ & $0$ & $-2Y_3$ &$0$  \\
			
			$Y_2$ &$Y_2$  & $-3Y_1$  &$ -H_2$  & $H_1$ & $X_1$ & $2Y_3$ & $0$ & $0$ \\
			
			$Y_3$ & $2Y_3$ & $0$ & $Y_2$  & $Y_1$  & $H_1$   & $0$ & $0$ & $0$ \\
		\end{tabular}
		\label{su(1,2)tabel}
	\end{center}
\end{table}
\begin{theorem}\label{su_operatorform}
	The algebra generated by the symplectic Dirac operators $D_s,D_t$ and their adjoints $X_s,X_t$ gives rise to a copy of the Lie algebra $\mathfrak{su}(1,2)$. 
\end{theorem}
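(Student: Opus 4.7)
The plan is to identify an explicit eight-element spanning set, compute all pairwise brackets, and then match the resulting structure constants to the $\mathfrak{su}(1,2)$ table. Since the theorem names the algebra rather than just its dimension or rank, the correspondence must be made explicit.

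First I would assemble the candidate basis for the algebra: the four generators $\{D_s, D_t, X_s, X_t\}$, the two ``quadratic'' operators $\{\Delta, r^2\}$ obtained already from the commutators $[D_s,D_t] = -i\Delta$ and $[X_s,X_t] = -ir^2$, and the two Cartan-type operators $\mathbb{E}+n$ (coming from the three $\mathfrak{sl}(2)$-triples previously identified) and $\mathcal{O}$ (arising from the cross-bracket $[D_t, X_s]$). This gives a candidate of dimension $\le 8$. To confirm it actually closes under $[\cdot,\cdot]$, I would compute the remaining ``mixed'' brackets directly from the differential-operator definitions: $[D_s, X_t]$, $[D_t, X_s]$, $[D_s, r^2]$, $[D_t, r^2]$, $[X_s, \Delta]$, $[X_t, \Delta]$, $[\Delta, r^2]$, together with the brackets of $\mathbb{E}+n$ and $\mathcal{O}$ against every other generator. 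These are all first-order computations in the Weyl algebra on $(\ux,\uy,\underline{q})$, and each one lands in the span of the proposed eight operators.

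Next, to match the abstract algebra $\mathfrak{su}(1,2)$ described in the excerpt, I would set up a concrete correspondence
\begin{align*}
H_1 \leftrightarrow a(\mathbb{E}+n),\quad H_2 \leftrightarrow b\,\mathcal{O},\quad X_1 \leftrightarrow c D_s,\ X_2 \leftrightarrow c D_t,\ X_3 \leftrightarrow d \Delta,\quad Y_j \leftrightarrow c' X_s,\ c' X_t,\ d' r^2,
\end{align*}
and fix the real constants $a,b,c,c',d,d'$ row by row against the provided commutator table. The three known $\mathfrak{sl}(2)$-triples force the normalisations for $(H_1,X_3,Y_3)$, $(H_1,X_1,Y_1)$ and $(H_1,X_2,Y_2)$, while the four bracket entries involving $H_2$ fix the normalisation of $\mathcal{O}$ and cross-check the signs on the bracket $[D_t, X_s] - [D_s, X_t]$. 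Once every entry of the $8\times 8$ table is reproduced, the Lie algebra isomorphism is established and the algebra is automatically eight-dimensional (so no candidate basis element is redundant).

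The main obstacle is twofold. The routine difficulty is sign- and $i$-bookkeeping: the generators involve imaginary coefficients and the distinction between $\mathfrak{su}(1,2)$ and its sister real forms $\mathfrak{su}(3)$ or $\mathfrak{sl}(3,\R)$ shows up only in such signs. The conceptual point that must not be skipped is confirming the real form: beyond matching the complexified structure $\mathfrak{sl}(3,\C)$, I would verify that the span over $\R$ of our operators equals the real span of the matrices $H_1,H_2,X_1,\dots,Y_3$, for instance by exhibiting a Cartan involution on $\Alg(D_s,D_t,X_s,X_t)$ whose fixed-point subalgebra is four-dimensional, matching the maximal compact subalgebra $\mathfrak{u}(2)$ of $\mathfrak{su}(1,2)$ (and ruling out the compact form $\mathfrak{su}(3)$, whose real form would give an eight-dimensional fixed-point set, and the split form $\mathfrak{sl}(3,\R)$, whose compact subalgebra is three-dimensional).
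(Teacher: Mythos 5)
Your proposal is correct and follows essentially the same route as the paper: identify the eight operators $\{D_s, D_t, X_s, X_t, \Delta, r^2, \mathbb{E}+n, \mathcal{O}\}$, check bracket closure, and match explicitly to the $\mathfrak{su}(1,2)$ commutator table --- the paper's proof simply states the identification $(H_1,\dots,Y_3)\leftrightarrow(\mathbb{E}+n, i\mathcal{O}, iX_t, iX_s, -\frac{i}{2}r^2, -D_t, D_s, \frac{i}{2}\Delta)$ and appeals to the precomputed table. Your additional step of confirming the real form via a Cartan involution is a sound refinement that the paper leaves implicit; given that the stated identification freely introduces factors of $i$, distinguishing $\mathfrak{su}(1,2)$ from the other real forms of $\mathfrak{sl}(3,\C)$ is precisely the point where extra care is warranted.
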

\begin{proof}
	It follows from straightforward calculations that the eight operators can be identified with the matrices from above as follows: 
	\begin{align*}
	(H_1,H_2,X_1,X_2,X_3,Y_1,Y_2,Y_3) \leftrightarrow (\mathbb{E}+n,i\mathcal{O},iX_t,iX_s,-\frac{i}{2}r^2,-D_t,D_s,\frac{i}{2}\Delta)\ .
	\end{align*}
	Together with the table above, this proves the theorem.
\end{proof}


\section{Decomposition of the symplectic spinor space}
In Section 2 we studied the decomposition of the (standard) spinor space $\mathbb{S}$ in terms of the spin-Euler operator $\beta$. In this section, we will do the same thing for the infinite-dimensional symplectic spinor space $\mS^{\infty}$. For this we will need the powers $\rho^k$ of the determinant representation (where $k \in \R$), given by 
\begin{align*}
\rho^k:\mathfrak{u}(n)\to \C : M\mapsto \det(M)^k\ ,    
\end{align*}
with highest weight $(k,k,\dots,k)_{\mathfrak{u}(n)}$. Note that these weights correspond to the action of a Cartan algebra for the complexification of $\mathfrak{u}(n)$, where the elements $H_j := iB_{jj}$ are chosen (see Lemma \ref{u-realisation}). As for $\C$-valued polynomials, we first mention the following well-known result: 
\begin{theorem}\label{hwpoly}
	The space $\mathcal{H}_{a,b}(\mathbb{R}^{2n},\C)$ of bi-homogeneous harmonics of degree $(a,b)$ is an irreducible $\mathfrak{u}(n)$-modules of highest weight $(a,0,\dots,0,-b)_{\mathfrak{u}(n)}$ and has highest weight vector $w_{a,b}(\uz,\overline{\uz})=z_1^a\overline{z}_n^b$. Moreover, the dimension of the module is given by \[ \dim(\mathcal{H}_{a,b}(\mathbb{R}^{2n},\C)) =\frac{n + a + b - 1}{n-1}{a + n - 2 \choose n - 2 }{b + n - 2 \choose n - 2 }. \]
\end{theorem}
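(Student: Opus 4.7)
The plan is to verify the three assertions—identification of the highest weight vector, irreducibility, and the dimension formula—in that order, leveraging the $\mathfrak{u}(n)$-realisation of Lemma \ref{u-realisation} together with the classical Howe duality $(\mathfrak{u}(n),\mathfrak{sl}(2))$ on $\mathcal{P}(\R^{2n},\C)$. I would begin by noting that on $\C$-valued (as opposed to spinor-valued) polynomials the $q$-terms in $B_{jj}$ act trivially, and after rewriting in the variables $z_j=x_j+iy_j$ and $\overline{z}_j=x_j-iy_j$ a direct computation gives the Cartan generator $H_j:=iB_{jj}=z_j\partial_{z_j}-\overline{z}_j\partial_{\overline{z}_j}$. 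From this the weight of $w_{a,b}(\uz,\overline{\uz})=z_1^a\overline{z}_n^b$ is read off as $(a,0,\dots,0,-b)$. The polynomial is manifestly bi-homogeneous of degree $(a,b)$, and it is killed by $\Delta=4\sum_j\partial_{z_j}\partial_{\overline{z}_j}$ since each summand would require $j=1$ (to hit $z_1^a$) and simultaneously $j=n$ (to hit $\overline{z}_n^b$), which is impossible for $n\geq 2$.

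Next, to confirm that $w_{a,b}$ is a highest weight vector I would complexify to $\mathfrak{gl}(n,\C)$, where the elementary matrix $E_{ij}$ acts on polynomials in $(\uz,\overline{\uz})$ as $z_i\partial_{z_j}-\overline{z}_j\partial_{\overline{z}_i}$. For any positive root (i.e.\ $i<j$) both summands annihilate $w_{a,b}$: the first because $j\geq 2$ forces $\partial_{z_j}z_1^a=0$, the second because $i<n$ forces $\partial_{\overline{z}_i}\overline{z}_n^b=0$. Irreducibility of $\mathcal{H}_{a,b}(\R^{2n},\C)$ then follows from Howe duality for the pair $(\mathfrak{u}(n),\mathfrak{sl}(2))$, in which the $\mathfrak{sl}(2)$ is $\Alg(\Delta,r^2,\mathbb{E}+n)$; this yields the multiplicity-free decomposition $\mathcal{P}_{a,b}(\R^{2n},\C)=\bigoplus_{k\geq 0}r^{2k}\mathcal{H}_{a-k,b-k}(\R^{2n},\C)$, with every summand $\mathfrak{u}(n)$-irreducible. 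A shortcut avoiding Howe theory is to observe directly that the weight $(a,0,\dots,0,-b)$ occurs in $\mathcal{P}_{a,b}(\R^{2n},\C)$ with multiplicity $1$, so the submodule generated by $w_{a,b}$ must exhaust the bi-homogeneous harmonic component.

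For the dimension formula I would apply the Weyl dimension formula to the irreducible $\mathfrak{gl}(n,\C)$-module of highest weight $\lambda=(a,0,\dots,0,-b)$. Organising the product $\prod_{i<j}(\lambda_i-\lambda_j+j-i)/(j-i)$ according to the type of positive root $e_i-e_j$, the contributions are: $1$ from all roots with $1<i<j<n$; the binomial $\binom{a+n-2}{n-2}$ from the roots $e_1-e_j$ with $2\leq j\leq n-1$; the binomial $\binom{b+n-2}{n-2}$ from the roots $e_i-e_n$ with $2\leq i\leq n-1$; and the factor $(a+b+n-1)/(n-1)$ from the single long root $e_1-e_n$. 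Multiplying the four blocks reproduces the claimed formula. The main obstacle I foresee is more bookkeeping than conceptual: fixing consistent conventions (sign of the complex structure, ordering of the Cartan weights, identification of the ``positive'' root vectors inside the realisation of Lemma \ref{u-realisation}) so that $w_{a,b}$ really sits at the top rather than at some other extremal weight; once these are nailed down, harmonicity, the highest-weight annihilation, irreducibility and the dimension count all reduce to routine checks.
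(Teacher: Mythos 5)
The paper states Theorem~\ref{hwpoly} without proof, citing it as a well-known result, so there is no proof in the paper to compare against. Your proposal is a valid and fairly standard proof of all three parts.

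A few comments. The identification $H_j = iB_{jj} = z_j\partial_{z_j}-\overline{z}_j\partial_{\overline{z}_j}$ on scalar polynomials, the reading-off of the weight of $z_1^a\overline{z}_n^b$, the harmonicity via $\Delta = 4\sum_j\partial_{z_j}\partial_{\overline{z}_j}$, the annihilation by positive root vectors (which matches the operators $C_{jk}+iA_{jk} = 2(z_j\partial_{z_k}-\overline{z}_k\partial_{\overline{z}_j})$ appearing later in the paper), and the block-by-block evaluation of the Weyl dimension formula are all correct. The bookkeeping you flag as the main risk is indeed a real concern, but you have chosen the conventions consistently.

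One small caveat: the ``shortcut'' you sketch — that multiplicity one of the weight $(a,0,\dots,0,-b)$ inside $\mathcal{P}_{a,b}$ forces the submodule generated by $w_{a,b}$ to exhaust $\mathcal{H}_{a,b}$ — is not a complete argument as written. Multiplicity one only rules out a second summand with that specific highest weight; it does not by itself exclude irreducible summands of $\mathcal{H}_{a,b}$ with a different (incomparable) highest weight. One needs an additional input, such as the $(\mathfrak{u}(n),\mathfrak{sl}(2))$-Howe duality decomposition $\mathcal{P}_{a,b} = \bigoplus_{k\geq 0} r^{2k}\mathcal{H}_{a-k,b-k}$ that you cite (together with counting dimensions or multiplicities), or the branching $\mathcal{H}_k\downarrow^{\mathsf{SO}(2n)}_{\mathsf{U}(n)} = \bigoplus_{a+b=k}\mathcal{H}_{a,b}$, to conclude irreducibility. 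Since you do invoke Howe duality as your primary route and only offer the multiplicity-one observation as an aside, the overall proof stands.
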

Next, we consider the branching of the infinite-dimensional symplectic spinors, when considered as a $\mathfrak{u}(n)$-module. 
\begin{theorem}\label{Srinf}
	The symplectic spinor space $\mathbb{S}^{\infty}$ decomposes as follows:
	\begin{align*}
	\mathbb{S}^{\infty}\bigg\downarrow^{\mathfrak{sp}(2n)}_{\mathfrak{u}(n)} =\bigoplus_{k=0}^{\infty}\mathbb{S}^{\infty}_{(r)} = \bigoplus_{r=0}^{\infty}\left(-\frac{1}{2},\dots,-\frac{1}{2}-r
	\right)_{\mathfrak{u}(n)}\ . 
	\end{align*}
	The space $\mathbb{S}^{\infty}_{(r)}$ hereby has dimension ${n+r-1 \choose r}$.
\end{theorem}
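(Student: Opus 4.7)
The plan is to identify $\mathbb{S}^{\infty}$ with a bosonic Fock space and decompose it via the well-known $\mathfrak{u}(n)$-action on symmetric tensors. The raising and lowering operators $R_j = q_j - \partial_{q_j}$ and $L_j = q_j + \partial_{q_j}$ already introduced in Section \ref{hsdsec} satisfy $[L_j, R_k] = 2\delta_{jk}$ together with $[L_j,L_k] = [R_j,R_k] = 0$, and the Gaussian vacuum $\Omega := e^{-|\underline{q}|^2/2}$ is annihilated by every $L_j$. Since the Hermite functions form a basis for $\mathcal{S}(\mathbb{R}^n,\C)$, we get a linear isomorphism $\mathbb{S}^{\infty} \cong \C[R_1,\ldots,R_n]\Omega$, carrying the natural $\N$-grading $\mathbb{S}^{\infty}_{(r)} := \mathrm{span}\{R_{j_1}\cdots R_{j_r}\Omega : 1 \leq j_1,\ldots,j_r \leq n\}$ by the number $r$ of creation operators.

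First I would specialise the $\mathfrak{u}(n)$-realisation from Lemma \ref{u-realisation} to functions depending only on $\underline{q}$, so as to express every generator in terms of $L,R$. Using $q_j^2 - \partial_{q_j}^2 = R_j L_j + 1$ and the off-diagonal analogue $q_jq_k - \partial_{q_j}\partial_{q_k} = \tfrac{1}{2}(R_jL_k + R_kL_j)$, the Cartan elements $H_j := iB_{jj}$ specialise to $-\tfrac{1}{2}R_jL_j - \tfrac{1}{2}$, while the off-diagonal generators $A_{jk}, C_{jk}$ become linear combinations of the operators $R_iL_k$ with $i \neq k$. These evidently preserve the grading by $r$, so each subspace $\mathbb{S}^{\infty}_{(r)}$ is $\mathfrak{u}(n)$-invariant; moreover, the uniform shift $-\tfrac{1}{2}$ on the Cartan identifies the entire picture with the standard $\mathfrak{gl}(n)$-action on $\C[R_1,\ldots,R_n]$ twisted by the determinant character $\rho^{-1/2}$.

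A direct check on the candidate highest weight vector $R_n^r \Omega$ then gives the Cartan weights $-\tfrac{1}{2}$ in positions $j < n$ and $-\tfrac{1}{2}-r$ in position $n$, matching the claimed tuple $(-\tfrac{1}{2},\ldots,-\tfrac{1}{2}-r)$; the raising operators $R_i L_n$ with $i<n$ annihilate this vector by an elementary degree count using $L_n R_n^r\Omega = 2r R_n^{r-1}\Omega$. Irreducibility of $\mathbb{S}^{\infty}_{(r)}\cong S^r\C^n\otimes \rho^{-1/2}$ is then the standard fact about symmetric powers as $\mathfrak{gl}(n)$-modules, and its dimension equals the usual count $\binom{n+r-1}{r}$ of degree-$r$ monomials in $n$ variables.

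The main obstacle I anticipate is the bookkeeping in the second step: the normal-ordering constants coming from $[L_j,R_j] = 2$ must combine with the factors of $i$ appearing in Lemma \ref{u-realisation} to produce exactly the shift by $-\tfrac{1}{2}$ on the ground state, since this is what fixes the $\rho^{-1/2}$ determinant twist in the highest weight and hence the precise form of the branching rule. Once that identification has been verified, the decomposition reduces to elementary representation theory of $\mathfrak{gl}(n)$ on polynomial algebras.
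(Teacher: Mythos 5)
Your approach is genuinely different from the paper's. The paper cites the King--Wybourne branching rule \cite{King2000} for the restriction of the metaplectic representation to the maximal compact $\mathfrak{u}(n)$, works out the resulting tensor products, and gets the dimensions from the Weyl dimension formula; the branching itself is treated as external input. You instead build the decomposition directly from the Fock-space model $\mathbb{S}^\infty \cong \C[R_1,\ldots,R_n]\Omega$, write the $\mathfrak{u}(n)$ generators in normal-ordered form, read off the grading invariance from the fact that every off-diagonal generator is a sum of terms $R_iL_k$, and locate the highest weight vector by hand. This is more self-contained and constructive, and it dovetails naturally with Lemma~\ref{Srweight} (which is precisely the highest-weight-vector computation you sketch). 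The cost is that you must get the bookkeeping exactly right, and there are two slips at that stage.

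First, the operators $R_iL_n$ with $i<n$ are \emph{lowering}, not raising, operators: with $H_j = -\tfrac{1}{2}R_jL_j-\tfrac{1}{2}$ one has $[H_j,R_j]=-R_j$ and $[H_j,L_j]=+L_j$, so $R_iL_n$ carries weight $e_n - e_i$, a negative root for $i<n$. It also does not annihilate $R_n^r\Omega$: by your own identity $L_nR_n^r\Omega = 2rR_n^{r-1}\Omega$, one gets $R_iL_nR_n^r\Omega = 2rR_iR_n^{r-1}\Omega \neq 0$. The correct positive root vectors are proportional to $L_jR_k$ with $j<k$ --- indeed the paper's $C_{jk}+iA_{jk}$ specialise on $\underline{q}$-only functions to $-L_jR_k$ --- and \emph{these} kill $R_n^r\Omega$ because $L_j$ with $j<n$ commutes past all $R_n$'s and annihilates the vacuum. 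Second, since the creation operators $R_j$ \emph{lower} the Cartan eigenvalues, the $r$-th graded piece is isomorphic as a $\mathfrak{u}(n)$-module to $S^r(\C^n)^\ast\otimes\rho^{-1/2}$ (highest weight $(0,\ldots,0,-r)$ twisted by $(-\tfrac12,\ldots,-\tfrac12)$), not to $S^r\C^n\otimes\rho^{-1/2}$; your own Cartan computation on $R_n^r\Omega$ already shows this. Neither slip affects the dimension count $\binom{n+r-1}{r}$, since the symmetric power and its dual have the same dimension, but as written the proposal misidentifies both the positive root vectors and the module, so these two points need to be corrected for the argument to stand.
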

\begin{proof}
	The branching of the metaplectic representation $\mathbb{S}^\infty = \mathbb{S}^\infty_+ \oplus \mathbb{S}^\infty_-$ to the maximal compact subalgebra $\mathfrak{u}(n) \subset \mathfrak{sp}(2n)$ was treated in \cite{King2000}, and is given by
	\begin{align*}
	\mathbb{S}_+^{\infty}&\to \left(-\frac{1}{2},\dots,-\frac{1}{2}\right)_{\mathfrak{u}(n)}\otimes \bigoplus_{r\in 2\mathbb{N}}(0,0,\dots,-r)_{\mathfrak{u}(n)}
	\\		\mathbb{S}_-^{\infty}&\to \left(-\frac{1}{2},\dots,-\frac{1}{2}\right)_{\mathfrak{u}(n)}\otimes \bigoplus_{r\in 2\mathbb{N}+1}(0,0,\dots,-r)_{\mathfrak{u}(n)}
	\end{align*}
	Working out these tensor products leads to the desired result. The dimension of these spaces can be calculated using the Weyl dimension formula:
	\begin{align*}
	\dim \mathbb{S}^{\infty}_{(r)} = \prod_{j=2}^n \frac{j+k-1}{j-1} = {n+r-1 \choose r}\ ,
	\end{align*}
	which concludes the proof. 
\end{proof}
\begin{remark}
	Note that this dimension coincides with the dimension of the space $\mathcal{P}_r(\R^{2n},\C)$ of $r$-homogeneous polynomials. This is not a coincidence, since the spaces $\mS^{\infty}_{(r)}$ are known as the eigenspaces for the bosonic quantum oscillator. 
\end{remark}
In the following scheme we summarise the comparison between classical spinors in $\mS$ and symplectic spinors in $\mS^{\infty}$:
\begin{table}[h!]
	\begin{center}
		\begin{tabular}{|c|c |c|}
			\hline
			\textbf{Orthogonal} & \textbf{Symplectic} \\
			\hline\hline
			$\mS = \bigoplus_{r=0}^n\mS_{(r)}$  & $\mS^{\infty} = \bigoplus_{r=0}^{\infty}\mS_{(r)}^{\infty}$ \\
			\hline
			$\dim\mathbb{S}_{(r)}=\dim \bigwedge^r V={\binom {n}{r}}$  & $\dim\mathbb{S}^{\infty}_{(r)}=\dim\operatorname{Sym}^r(V)={n+r-1\choose r}$  \\
			\hline
			spin-Euler $\beta$& Hermite operator $\Delta_q - |\underline{q}|^2$
			\\
			\hline 
		\end{tabular}
	\end{center}
\end{table}

\begin{lemma}\label{Srweight}
	The unitary representation $\mathbb{S}_{(r)}^{\infty}$ is generated by the weight vector 
	\begin{align*}
	w(\underline{q}) = e^{-\frac{1}{2}|\underline{q}|^2}\mathsf{H}_r(q_n)\ ,    
	\end{align*}
	where $\mathsf{H}_r(q_n)$ is a Hermite polynomial of degree $r$ in the variable $q_n$.
\end{lemma}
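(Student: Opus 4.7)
The plan is to pin down $w(\underline{q})$ by computing its $\mathfrak{u}(n)$-weight explicitly and matching it to the highest weight of $\mathbb{S}^{\infty}_{(r)}$ from Theorem \ref{Srinf}. First I would take the Cartan generators $H_j := iB_{jj}$ from Lemma \ref{u-realisation} and restrict them to functions depending only on $\underline{q}$. On such pure spinors the $(x_j,y_j)$-piece of $B_{jj}$ disappears, leaving
\[
H_j\big|_{\mathbb{S}^{\infty}} \;=\; \tfrac{1}{2}\bigl(\partial_{q_j}^2 - q_j^2\bigr),
\]
which is minus (half) the quantum harmonic oscillator Hamiltonian in $q_j$. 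The standard Hermite identity $\tfrac{1}{2}(\partial_q^2-q^2)(e^{-q^2/2}\mathsf{H}_k(q)) = -(\tfrac{1}{2}+k)\,e^{-q^2/2}\mathsf{H}_k(q)$ then yields, factor by factor on the separable vector $w(\underline{q})=\prod_{j=1}^n e^{-q_j^2/2}\cdot \mathsf{H}_r(q_n)$, the weight $(-\tfrac{1}{2},\dots,-\tfrac{1}{2},-\tfrac{1}{2}-r)_{\mathfrak{u}(n)}$.

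This is exactly the highest weight that Theorem \ref{Srinf} attaches to $\mathbb{S}^{\infty}_{(r)}$. Moreover $\mathbb{S}^{\infty}$ admits a basis of Hermite tensors $\{e^{-|\underline{q}|^2/2}\prod_j \mathsf{H}_{k_j}(q_j)\}$, in which the summands $\mathbb{S}^{\infty}_{(s)}$ are separated by the total oscillator level $s=\sum_j k_j$. Hence the weight above, whose $k_j$-pattern is $(0,\dots,0,r)$ with $\sum k_j=r$, can only occur inside the single summand $\mathbb{S}^{\infty}_{(r)}$. Thus $w\in \mathbb{S}^{\infty}_{(r)}$, and by irreducibility of that module $w$ cyclically generates it.

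To confirm that $w$ is in fact the \emph{highest} (and not merely an extremal) weight vector, I would identify the positive root operators of $\mathfrak{u}(n)_{\mathbb{C}}$ on pure spinors as products $L_j R_k$ with $j<k$, expressible as $\mathbb{C}$-linear combinations of $A_{jk}$ and $C_{jk}$ from Lemma \ref{u-realisation}. Each such operator kills $w$ because $L_j w=0$ for every $j<n$: in the variable $q_j$, $w$ reduces to the Gaussian ground state $e^{-q_j^2/2}\mathsf{H}_0(q_j)$, which is annihilated by the lowering operator $L_j=q_j+\partial_{q_j}$. The only obstacle worth mentioning is notational — the $\mathfrak{u}(n)$-generators of Lemma \ref{u-realisation} are phrased in all three sets of variables $(\underline{x},\underline{y},\underline{q})$ — but once restricted to the symplectic spinor space the entire statement reduces to the familiar spectrum of the harmonic oscillator.
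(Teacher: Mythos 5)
Your proposal is correct and takes essentially the same route as the paper: compute the $\mathfrak{u}(n)$-weight of $w(\underline{q})$ by applying the Cartan generators $\tfrac{1}{2}(\partial_{q_j}^2-q_j^2)$ factor by factor (the Gaussians give $-\tfrac{1}{2}$ in each coordinate $j<n$, the Hermite factor gives $-(r+\tfrac{1}{2})$ in coordinate $n$), and then check that $w$ is killed by the positive root vectors, which on pure spinors reduce to $\pm L_j R_k$ with $j<k$ and vanish on $w$ since $L_j$ annihilates the Gaussian ground state in $q_j$ for $j<n$. The only additions you make are spelling out the $L_jR_k$ form explicitly and invoking the total-oscillator-level grading to confirm $w\in\mathbb{S}^\infty_{(r)}$; both are consistent elaborations of what the paper leaves to the reader.
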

\begin{proof}
	For all indices $1 \leq j < n$ we directly obtain that
	\begin{align*}
	H_j w(\underline{q}) = \frac{1}{2}\left(\partial_{q_j}^2 - q_j^2\right)w(\underline{q}) = -\frac{1}{2}w(\underline{q})\ .
	\end{align*}
	The action of the last Cartan element $H_n$ is slightly more complicated: 
	\begin{align*}
	\frac{1}{2}\left(\partial_{q_n}^2 - q_n^2\right)w(\underline{q}) =  \frac{1}{2}  e^{-\frac{1}{2}|\underline{q}|^2} \left(4 (r-1) r \mathsf{H}_{r-2}\left(q_n\right)-\mathsf{H}_r\left(q_n\right)-4 q_n r \mathsf{H}_{r-1}\left(q_n\right)\right)\ .	
	\end{align*}
	Using the recursion relation for Hermite polynomials, which says that 
	\begin{align*}
	\mathsf{H}_{r}(q_n) =2q_n\mathsf{H}_{r-1}(q_n)-2(r-1)\mathsf{H}_{r-2}(q_n)\ ,
	\end{align*}
	we can rewrite this action as
	\[  \frac{1}{2} \left(\partial_{q_n}^2 - q_n^2\right)w(\underline{q}) =-\frac{1}{2}e^{-\frac{1}{2}|\underline{q}|^2}(2r+1)\mathsf{H}_r(q_n) =-\frac{1}{2}\left(r+\frac{1}{2}\right)w(\underline{q})\ .\]
	One further verifies that $w(\underline{q})$ is a solution for the positive root vectors, which finishes the proof.
\end{proof}


\section{The symplectic Hermitian Fischer decomposition}
Now that we have obtained the reduction of the symplectic spinors $\mathbb{S}^{\infty}$ in terms of the spaces  $\mathbb{S}^{\infty}_{(r)}$ and have identified our candidate for the Howe dual pair as $\mathsf{U}(n)\times \mathfrak{su}(1,2)$, we are ready to formulate the Fischer decomposition for the space $\mathcal{H}_{a,b}(\R^{2n},\C)\otimes \mathbb{S}_{(r)}^{\infty}$. In terms of $\mathfrak{u}(n)$-irreducible representations, we need to compute the tensor product 
\begin{align} 
(a,0,\dots,0,-b)_{\mathfrak{u}(n)} \otimes \left(-\frac{1}{2},\dots,-\frac{2r+1}{2}\right)_{\mathfrak{u}(n)}\ .
\end{align}\label{tensorr}
Our strategy is the following:
\begin{enumerate}
	\item First we will compute $(a,0,\dots,0,-b)_{\mathfrak{u}(n)}\otimes (0,\cdots,0,-r)_{\mathfrak{u}(n)}$ using a suitable `dualisation argument' and then determine the tensor product of the resulting summands with the determinant representation $\rho^{-\frac{1}{2}}$.
	\item Of course, the procedure in step (i) yield a lot of information, but it only gives the decomposition of the tensor product \textit{up to an isomorphism}. In order to make the isomorphism into an equality (as was in Theorem \ref{HFD}) we need the so-called \textit{embedding factors} which will turn out to be unitary invariant differential operators.  
\end{enumerate}
For two irreducible representations for $\mathfrak{u}(n)$ with highest weights $\alpha$ and $\beta$, the tensor product decomposes as $\alpha \otimes \beta \cong \bigoplus_{\gamma} \mu_{\gamma}\gamma$, where $\mu_{\gamma}$ is the multiplicity. As a particular case, we mention the following result (from \cite{zhelobenko1973compact}): 
\begin{align*}
(m,0,\dots,0)_{\mathfrak{u}(n)}\otimes (\beta_1,\dots,\beta_n)_{\mathfrak{u}(n)}\cong \bigoplus\:(\beta_1+\nu_1,\dots,\beta_n+\nu_n)_{\mathfrak{u}(n)}    
\end{align*}
where the summation is such that $\nu_1+\cdots+\nu_n=m$. Moreover, we have the conditions $0 \leq \nu_1 \leq m$ and $0\leq \nu_{k}\leq \beta_{k-1} - \beta_{k}$ for $2 \leq k \leq n$. Invoking the fact that $(V \otimes W)^\ast \cong V^\ast \otimes W^\ast$ (the dualisation mentioned above), it then suffices to put 
\begin{align*}
m = r \quad \mbox{and} \quad (\beta_1,\ldots,\beta_n) = (b,0,\ldots,0,-a)
\end{align*}
in the formula above, after which one can take the dual (or contragradient) and a tensor product with the (one-dimensional) determinant representation $\rho^{-\frac{1}{2}}$ to arrive at the theorem below. Note that from now on, we restrict ourselves to the case $n \geq 3$. 
\begin{theorem}\label{tensorP}
	We have the following tensor product decomposition:
	\[ \mathcal{H}_{a,b}\otimes \mathbb{S}_{(r)}^{\infty} \cong  \bigoplus_{j=0}^a\bigoplus_{i=0}^b \left(a-j-\frac{1}{2},-\frac{1}{2},-\frac{1}{2},\cdots,-i-\frac{1}{2},i+j-b-r-\frac{1}{2}\right)_{\mathfrak{u}(n)}\ ,\]
	whereby the indices $i$ and $j$ satisfy the additional constraint $i + j \leq r$. 
\end{theorem}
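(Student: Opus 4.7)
The strategy is essentially the two-step scheme outlined just before the theorem. First I would peel off the determinant factor: since $\rho^{-\frac{1}{2}}$ has highest weight $(-\frac{1}{2},\dots,-\frac{1}{2})$, we can write
\[ \mathbb{S}^{\infty}_{(r)} \cong V_{(0,\dots,0,-r)}\otimes \rho^{-\frac{1}{2}}, \]
which reduces the task to decomposing $\mathcal{H}_{a,b}\otimes V_{(0,\dots,0,-r)}$. The branching rule from \cite{zhelobenko1973compact} quoted in the excerpt only handles tensor products whose first factor has highest weight of the form $(m,0,\dots,0)$, so I would then invoke the duality identity $(V\otimes W)^{*}\cong V^{*}\otimes W^{*}$ together with $V^{*}_{(\lambda_1,\ldots,\lambda_n)} = V_{(-\lambda_n,\ldots,-\lambda_1)}$ for $\mathfrak{u}(n)$-irreducibles. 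This converts the problem into the decomposition of $V_{(r,0,\ldots,0)}\otimes V_{(b,0,\ldots,0,-a)}$, which is precisely the situation covered by the formula.

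Applying Zhelobenko's rule with $m=r$ and $(\beta_1,\ldots,\beta_n)=(b,0,\ldots,0,-a)$, the constraints $0\le \nu_k\le \beta_{k-1}-\beta_k$ force $\nu_3=\cdots=\nu_{n-1}=0$ (this is where the hypothesis $n\ge 3$ is genuinely used), while the surviving parameters satisfy $0\le \nu_2\le b$, $0\le \nu_n\le a$, $\nu_1\ge 0$, and $\nu_1+\nu_2+\nu_n=r$. The resulting summands have highest weights $(b+\nu_1,\nu_2,0,\ldots,0,-a+\nu_n)$.

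Finally I would dualise back and tensor with $\rho^{-\frac{1}{2}}$. Reversing and negating the weight vector gives $(a-\nu_n,0,\ldots,0,-\nu_2,-b-\nu_1)$, and after setting $i:=\nu_2$, $j:=\nu_n$ and substituting $\nu_1=r-i-j$, the last entry becomes $i+j-b-r$. Shifting every coordinate by $-\frac{1}{2}$ to account for the determinant factor then yields exactly
\[ \left(a-j-\tfrac{1}{2},-\tfrac{1}{2},\ldots,-\tfrac{1}{2},-i-\tfrac{1}{2},i+j-b-r-\tfrac{1}{2}\right)_{\mathfrak{u}(n)}, \]
with index ranges $0\le i\le b$, $0\le j\le a$, and $\nu_1\ge 0$ translating into $i+j\le r$. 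The plan involves no deep obstacle: the only thing to watch carefully is the bookkeeping of weights through the dualisation and the determinant shift. Note that this argument establishes the theorem only up to isomorphism; promoting it to an explicit direct sum decomposition (as in Theorem \ref{HFD}) will require the construction of $\mathfrak{u}(n)$-invariant embedding factors, which is carried out later in the paper.
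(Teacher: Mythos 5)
Your proposal is correct and follows essentially the same route the paper itself sketches in the paragraph preceding the theorem: split off the determinant factor $\rho^{-1/2}$, dualise to bring the tensor product into the form $(r,0,\ldots,0)\otimes(b,0,\ldots,0,-a)$ covered by Zhelobenko's Pieri-type rule, apply it, then dualise back and tensor with $\rho^{-1/2}$. Your bookkeeping through the dualisation (reversing and negating the weight, setting $i=\nu_2$, $j=\nu_n$, $\nu_1=r-i-j$) and the observation that $\nu_1\ge 0$ yields $i+j\le r$ are exactly right.
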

\begin{remark}
	If $a, b \geq r$, the direct sum at the right-hand side has a maximal number of summands, equal to the triangular number $t_{r+1} = \frac{1}{2}(r+1)(r+2)$. This should be contrasted with the classical case (the case $\mcH_{a,b} \otimes \mathbb{S}_{(r)}$ in Hermitean Clifford analysis), where the maximal number of summands is always bounded by $4$. 
\end{remark}
As usual in the framework of Fischer decompositions, we will now try to make the information in the theorem above more explicit. 
\begin{remark}\label{H123}
	Let us for instance calculate $\mathcal{H}_{1,2}(\mathbb{C}^3,\mathbb{C})\otimes \mathbb{S}_{(3)}^{\infty}$. Using the formula from above we have:
	\begin{align*}
	(1,0,-2)_{\mathfrak{u}(3)}\otimes (0,0,-3)_{\mathfrak{u}(3)}' &\cong (1,0,-5)_{\mathfrak{u}(3)}' \oplus (1,-1,-4)_{\mathfrak{u}(3)}' \oplus (1,-2,-3)_{\mathfrak{u}(3)}'\\
	& \oplus (0,0,-4)_{\mathfrak{u}(3)}'\oplus (0,-1,-3)_{\mathfrak{u}(3)}'\oplus (0,-2,-2)_{\mathfrak{u}(3)}',
	\end{align*}
	where we have introduced the shorthand notation $(a,b,c)':=\left(a-\frac{1}{2},b-\frac{1}{2},c-\frac{1}{2}\right)$.
\end{remark}
Before we can express the summands in the decomposition for $\mathbb{S}^{(r)}$-valued harmonics in terms of $h$-monogenics, we introduce the following: 
\begin{definition}
	For positive integers $a, b$ and $r$ with $b \leq r$ we use the notation $\mathcal{M}_{a,b}^{(r)}$ for the irreducible representation highest weight 
	\begin{align*}
	\left(a-\frac{1}{2},-\frac{1}{2},\ldots,-b-\frac{1}{2},-r-\frac{1}{2}\right)\ . 
	\end{align*}
	We restrict ourselves to the case $n \geq 3$ here. 
\end{definition}
The dimension of these spaces follows from the Weyl dimension formula, for which we refer to a standard textbook: 
\begin{lemma}\label{dimensionformulaBarnes}
	Let $a,b, r \geq 0$ be arbitrary integers with $b \leq r$, then the dimension of the $\mathfrak{u}(n)$-irreducible representation $\mathcal{M}_{a,b}^{(r)}$ is given by
	\begin{align*}
	\dim \mathcal{M}_{a,b}^{(r)}=\frac{\Gamma(a+n-2)\Gamma(b+n-2)\Gamma(r+n-1)}{\Gamma(a+1)\Gamma(b+1)\Gamma(r+2)}\frac{(a+b+n-2)(a+r+n-1)(r-b+1)}{(n-1)!(n-2)!(n-3)!}\ .
	\end{align*}
\end{lemma}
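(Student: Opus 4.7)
The plan is to apply Weyl's dimension formula directly to the $\mathfrak{u}(n)$-module with highest weight
\[
\lambda = \left(a-\tfrac{1}{2},\, -\tfrac{1}{2},\ldots,-\tfrac{1}{2},\, -b-\tfrac{1}{2},\, -r-\tfrac{1}{2}\right).
\]
Since $\dim V_\lambda$ depends only on the differences $\lambda_i - \lambda_j$, one may shift by the scalar weight $\tfrac{1}{2}(1,\ldots,1)$ (equivalently, twist by $\rho^{1/2}$) and instead work with the integer weight $(a, 0,\ldots, 0, -b, -r)$. The Weyl dimension formula for $\mathfrak{gl}(n)$ then reads
\[
\dim \mathcal{M}_{a,b}^{(r)} = \prod_{1 \leq i < j \leq n} \frac{\lambda_i - \lambda_j + j - i}{j - i}.
\]

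The bulk of the work is bookkeeping: I would partition the $\binom{n}{2}$ positive-root factors according to whether the indices $i, j$ land in the ``bulk'' block of zeros (positions $2,\ldots,n-2$) or on one of the three ``special'' entries $\lambda_1 = a$, $\lambda_{n-1} = -b$, $\lambda_n = -r$. This gives seven cases: pairs inside the bulk contribute $1$, pairs of type $(1,j)$ for $2 \leq j \leq n-2$ collect into $\binom{a+n-3}{n-3}$, pairs of type $(i, n-1)$ for $2 \leq i \leq n-2$ collect into $\binom{b+n-3}{n-3}$, and pairs of type $(i, n)$ for $2 \leq i \leq n-2$ collect into $\Gamma(r+n-1)/(\Gamma(r+2)\Gamma(n-1))$. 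The three remaining pairs $(1,n-1)$, $(1,n)$ and $(n-1,n)$ contribute the isolated linear factors
\[
\frac{a+b+n-2}{n-2},\qquad \frac{a+r+n-1}{n-1},\qquad (r-b+1),
\]
respectively.

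Multiplying all of these together and rewriting binomial coefficients in Gamma-function form, the denominators $\Gamma(n-2)^2 \cdot (n-2) \cdot \Gamma(n-1) \cdot (n-1)$ collapse via $(k-1)\Gamma(k-1) = \Gamma(k)$ to $(n-1)!(n-2)!(n-3)!$, producing exactly the stated formula. The only real ``obstacle'' is avoiding a bookkeeping slip in the middle block; this is handled by treating the ranges of $i$ and $j$ symbolically and checking the edge case $n = 3$ (where the bulk-block products are empty) separately, which matches the restriction $n \geq 3$ imposed in the definition.
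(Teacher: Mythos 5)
Your proposal is correct and takes exactly the approach the paper intends: the paper's proof is a one-line appeal to the Weyl dimension formula, and you have simply spelled out the bookkeeping. Your shift by $\tfrac{1}{2}(1,\ldots,1)$ to the integer weight $(a,0,\ldots,0,-b,-r)$, the seven-way case split of the positive-root factors, and the Gamma-function regrouping of the denominator into $(n-1)!(n-2)!(n-3)!$ all check out.
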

Although this is not absolutely necessary, one can now use the formula above to verify Theorem \ref{tensorP} by a dimensional argument. Indeed, using symbolic computation software (the authors used Mathematica), one can verify that the following numerical equality holds: 
\begin{lemma}
	For arbitrary positive integers $a, b, r \geq 0$ for which $b \leq r$, we have that
	\begin{align*}
	\sum_{i=0}^b\sum_{j=0}^a\dim\mathcal{M}_{a,b}^{(r)}(\R^{2n}) &= {r+n-1 \choose n-1}\frac{(a+n+n-1)\Gamma(a+n-1)\Gamma(b+n-1)}{(n-1)!(n-2)!\Gamma(a+1)\Gamma(b+1)}\ ,
	\end{align*}
	whereby the indices $i$ and $j$ satisfy the additional constraint $i + j \leq r$. 
\end{lemma}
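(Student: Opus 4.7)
The plan is to recognise this identity as the dimension count accompanying Theorem \ref{tensorP}. By Theorems \ref{hwpoly} and \ref{Srinf}, the product $\dim \mathcal{H}_{a,b}(\R^{2n},\C) \cdot \dim \mathbb{S}^{\infty}_{(r)}$ produces exactly the right-hand side (reading the apparent typo $(a+n+n-1)$ as $(a+b+n-1)$). The summand on the left-hand side, as written, does not depend on $i$ or $j$; it should of course be understood as $\dim \mathcal{M}_{a-j,\,i}^{(b+r-i-j)}$, matching the highest weights appearing in the branching of Theorem \ref{tensorP}. So the lemma simply asserts additivity of dimension across that direct sum decomposition, to be verified by explicit computation.

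First I would substitute the Weyl formula from Lemma \ref{dimensionformulaBarnes} for each summand and pull out of the double sum every factor independent of $(i,j)$. What remains is a double sum in which each term is a product of Pochhammer symbols divided by ordinary factorials, multiplied by the cubic $(a-j+i+n-2)(a+b+r-2i-2j+n-1)(r-b+i-j+1)$ coming from the last bracket in Lemma \ref{dimensionformulaBarnes}. Using standard $\Gamma$-function shifts, this cubic can be absorbed into the arguments so that each term reduces to a product of ordinary binomial coefficients.

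The main obstacle is the combinatorial evaluation of the resulting double sum. The natural strategy is an iterated Chu-Vandermonde reduction: for each fixed $j$, the inner sum over $i \in \{0,\ldots,\min(b,r-j)\}$ should collapse to a single binomial in $j$; a second application of Vandermonde then collapses the remaining sum over $j$ into the desired factor $\binom{r+n-1}{n-1}$ times the combinatorial expression carrying $a$ and $b$. The truncation $i+j \leq r$ interacts awkwardly with the binomial coefficients, and handling it cleanly, likely by splitting into the cases $a+b \leq r$ and $a+b > r$ or by extending the summation range and showing the extraneous terms cancel, is the crux of the argument. Since the authors indicate that the check was carried out in a computer algebra system, an alternative is to treat the identity as a certificate produced by the Wilf-Zeilberger machinery, which mechanically handles the hypergeometric bookkeeping; in the write-up I would present only the final simplified form and relegate the routine manipulations.
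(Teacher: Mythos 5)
Your reading of the lemma is correct: you have spotted both typos (the summand must be $\dim \mathcal{M}_{a-j,\,i}^{(b+r-i-j)}$ to match Theorem~\ref{tensorP}, and $(a+n+n-1)$ should read $(a+b+n-1)$ so that the right-hand side equals $\dim\mathcal{H}_{a,b}(\R^{2n},\C)\cdot\dim\mathbb{S}^{\infty}_{(r)}$), and your identification of the statement as the dimension count underwriting Theorem~\ref{tensorP} is exactly the paper's reading. The paper itself gives no written proof but, as you anticipate, presents the identity as a Mathematica-verified consistency check, so your plan of deferring the truncated double hypergeometric sum to Wilf--Zeilberger or symbolic computation is in substance the same approach as the paper; the Vandermonde sketch is a reasonable hand-route but, as you note yourself, is left unfinished at the truncation $i+j\le r$, which is precisely what the authors sidestepped by appealing to a CAS.
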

Our goal for the rest of this section is to show that these spaces are precisely the spaces of (symplectic) $h$-monogenics of a degree $(a,b;r)$, so that we can also define these spaces in an equivalent way as follows:
\begin{align}\label{M_spaces_kernel}
\mathcal{M}_{a,b}^{(r)}(\R^{2n}) = \mathcal{P}_{a,b}(\R^{2n},\mathbb{S}^\infty_{(r)}) \cap \big(\ker(D_s) \cap \ker(D_t)\big)\ .
\end{align}
\begin{remark}\label{br_condition}
	Note that the condition $b \leq r$ on the parameters may seem a bit strange, since this implies that spaces of $h$-monogenics will not always exist. However, a similar phenomenon also occurs in the classical case. One can for instance see that there are no $h$-monogenic polynomials of degree $(a,0)$ taking values in the homogeneous spinor space $\mathbb{S}_{(0)}$. So even in Hermitean Clifford analysis, the existence of the spaces $\mcM_{a,b}^{(j)}$ is governed by restrictions on the parameters $(a,b;j)$. 
\end{remark}
The inclusion from left to right in formula (\ref{M_spaces_kernel}) is rather immediate, the opposite direction requires more work. To see why
\begin{align}
\mathcal{M}_{a,b}^{(r)} \subset \mathcal{P}_{a,b}(\R^{2n},\mathbb{S}_{(r)}^\infty) \cap \big(\ker(D_s) \cap \ker(D_t)\big)\ ,   
\end{align}
it suffices to note that we have an explicit model for the highest weight vector which generates the representation under the action of the negative root vectors in $\mathfrak{u}(n)$. Indeed, recalling the definition for the (raising) operators $R_j=q_j-\partial_{q_j}$ and the Hermite polynomials $\mathsf{H}_k(t)$, it is not hard to see that the function 
\begin{align*}
w_{a,b;r}(\ux,\uy;\underline{q}) &:= z_1^a\big(\overline{z}_{n-1}R_n-\overline{z}_nR_{n-1}\big)^b \mathsf{H}_{r-b}(q_n)e^{-\frac{1}{2}|\underline{q}|^2}\ ,    
\end{align*}
satisfies the necessary requirements. It suffices to consider the action of the positive root operators 
\begin{align*}
C_{jk} + iA_{jk} = 2\big(z_j\partial_{z_k} -\overline{z}_k\partial_{\overline{z}_j}) +(q_j+\partial_{q_j})(\partial_{q_k}-q_k)\ ,
\end{align*}
where $1 \leq j < k \leq n$, and the Cartan elements 
\begin{align*}
H_j = H_{j}=-i(z_j\partial_{z_j}-\overline{z}_j\partial_{{\overline{z}}_j})+\frac{i}{2}\left(q_j^2-\partial_{q_j}^2\right)
\end{align*}
for $1 \leq j \leq n$. The former all act trivially on $w_{a,b;r}(\ux,\uy;\underline{q})$, whereas the latter indeed give the desired eigenvalues. Next, we prove that this function is also a solution the symplectic Dirac operators. One has that 
\begin{align*}
D_z w_{a,b;r}(\ux,\uy;\underline{q}) &= az_1^{a-1}L_1 (\overline{z}_{n-1}R_n-\overline{z}_nR_{n-1})^b \mathsf{H}_{r-b}(q_n)e^{-\frac{1}{2}|\underline{q}|} = 0\ , 
\end{align*}
since the lowering operator $L_1$ acts on the Gaussian only (here we have used the fact that $n \geq 3$, so that we can commute $L_1$ with the operators $R_n$ and $R_{n-1}$). A similar calculation for the operator $D_z^\dagger$ then proves that $w_{a,b;r}(\uz,\overline{\uz})$ is indeed $h$-monogenic.\\
To prove the opposite inclusion in equation (\ref{M_spaces_kernel}), we will argue as follows: given the fact that symplectic $h$-monogenics are always harmonic, we certainly know that 
\begin{align*}
\mathcal{P}_{a,b}(\R^{2n},\mathbb{S}^\infty_{(r)}) \cap \big(\ker(D_s) \cap \ker(D_t)\big) \subset \mcH_{a,b}(\R^{2n},\C) \otimes \mathbb{S}^\infty_{(r)}\ ,
\end{align*}
whereby we assume that $b \leq r$. Now, we know how the right-hand side decomposes (see Theorem \ref{tensorP}), so any polynomial $P(\ux,\uy)$ of degree $(a,b)$ on $\R^{2n}$ taking values in $\mathbb{S}^{(r)}$ which satisfies $D_s P = D_t P = 0$ can be written as a linear combination of elements belonging to the spaces predicted by this theorem. However, in what follows we will prove that there is a {\em unique} summand in the decomposition containing $h$-monogenics, and that all the other summands are orthogonal (in a suitable sense) to the space of $h$-monogenics. This will thus force $P(\ux,\uy)$ to be an element of that unique component, which will turn out to be the one generated by $w_{a,b;r}(\ux,\uy;\underline{q})$. \\
\par
The main idea behind the proof announced in the previous paragraph involves the construction of the so-called `embedding factors'. These are (in full generality) the $\mathfrak{u}(n)$-invariant linear maps injecting $\gamma$ into $\alpha \otimes \beta \cong \bigoplus \mu_\gamma \gamma$. For example, the vector variable $\uz$ appearing in Theorem \ref{HFD} can be seen as an example of such an embedding map, sending $h$-monogenics of certain degree into the tensor product $\mcH_{a,b} \otimes \mathbb{S}_{(r)}$. In the setting of Howe dual pairs, these embedding factors are typically given by the operators dual (in the sense of the Fischer inner product) to the differential operators defining the (monogenic) solution spaces. This is also the case in the present situation, since one can show the following: 
\begin{lemma}
	Let $0 \leq \ell \leq b$ and $0\leq k\leq b$, then the following properties hold:
	\begin{align*}
	X_z^{\ell}\mathcal{M}_{a,b-\ell}^{(r+\ell)} \in \mathcal{H}_{a,b}\otimes \mathbb{S}^{\infty}_{(r)} \quad \mbox{and} \quad (X_{z}^\dagger)^{k}\mathcal{M}_{a-k,b}^{(r-k)} \in \mathcal{H}_{a,b}\otimes \mathbb{S}^{\infty}_{(r)}\ .
	\end{align*}
	Note that these statements only hold if the parameters characterising the $\mcM$-spaces make sense, so $b - \ell \leq r + \ell$ and $b \leq r - k$ respectively. 
\end{lemma}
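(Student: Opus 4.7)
The plan is to split the verification into three checks: bi-degree, spinor level, and harmonicity. The first two are pure bookkeeping. Since $X_z = \tfrac{i}{2}\langle\underline{L},\underline{\overline{z}}\rangle$ multiplies by $\overline{z}_j$, and since the annihilation operator $L_j = q_j + \partial_{q_j}$ lowers the Hermite level by one (this is precisely the recursion used in Lemma \ref{Srweight}), the operator $X_z^\ell$ sends $\mathcal{M}_{a,b-\ell}^{(r+\ell)}$ into $\mathcal{P}_{a,b}(\R^{2n},\mathbb{S}^\infty_{(r)})$. The symmetric argument, with $R_j = q_j - \partial_{q_j}$ now playing the creation role, handles the parameters for $(X_z^\dagger)^k$.

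The substantive step is harmonicity. First I would record the Dolbeault factorisation $\Delta = -2[D_z,D_z^\dagger]$, which follows immediately from $[D_s,D_t] = -i\Delta$. The argument then hinges on two commutator identities:
\begin{align*}
[D_z,X_z] = 0 \qquad \text{and} \qquad [D_z^\dagger,X_z] = -\tfrac{i}{2}\mathcal{K}\ ,
\end{align*}
where $\mathcal{K} := \mathcal{O} + \mathbb{E} + n$ acts as a \emph{scalar} on each weight space $\mathcal{P}_{a,b}\otimes\mathbb{S}^\infty_{(r)}$, with eigenvalue $2b - 2r$ (once one observes that $\sum_j(\partial_{q_j}^2 - q_j^2)$ acts on $\mathbb{S}^\infty_{(r)}$ by $-2r-n$). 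The first identity is transparent because the $L_j$ commute pairwise and $[\partial_{z_j},\overline{z}_k]=0$. The second is most efficiently verified via the $\mathfrak{su}(1,2)$-dictionary of Theorem \ref{su_operatorform}: writing $D_z,D_z^\dagger,X_z,X_z^\dagger$ as explicit linear combinations of $X_1,X_2,Y_1,Y_2$ reduces it to the tabulated brackets, which return precisely $H_2 + iH_1$ up to a constant.

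Equipped with this, fix $M\in\mathcal{M}_{a,b-\ell}^{(r+\ell)}$. The condition $D_sM = D_tM = 0$ is equivalent to $D_zM = D_z^\dagger M = 0$, so $[D_z,X_z]=0$ at once gives $D_zX_z^\ell M = X_z^\ell D_zM = 0$. Iterating $[D_z^\dagger,X_z] = -\tfrac{i}{2}\mathcal{K}$, and using the weight relation $[\mathcal{K},X_z] = 4X_z$ together with the scalar action of $\mathcal{K}$ along the $X_z$-tower, one obtains an identity of the form $D_z^\dagger X_z^\ell M = \lambda\, X_z^{\ell-1}M$ for an explicit constant $\lambda = \lambda(a,b,r,\ell)$. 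Applying $D_z$ once more and invoking $[D_z,X_z^{\ell-1}] = 0$ together with $D_zM = 0$ yields $D_zD_z^\dagger X_z^\ell M = 0$, and therefore
\begin{align*}
\Delta X_z^\ell M = -2\bigl(D_zD_z^\dagger - D_z^\dagger D_z\bigr)X_z^\ell M = 0\ ,
\end{align*}
which closes the harmonicity check. The statement for $(X_z^\dagger)^k$ follows by the dual argument, built on $[D_z^\dagger,X_z^\dagger] = 0$ and $[D_z,X_z^\dagger] = \tfrac{i}{2}(\mathcal{O}-\mathbb{E}-n)$. The main obstacle is the clean identification of the two mixed commutators: a direct Witt-basis computation works but is cluttered by the correction $[L_j,R_k] = 2\delta_{jk}$, so routing the verification through the $\mathfrak{su}(1,2)$-bracket table is what keeps the argument painless.
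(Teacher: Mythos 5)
Your proof is correct, and it reaches the same conclusion as the paper's, but by a slightly longer route. The paper's own argument is a one-liner: after the bookkeeping, it records the two commutators $[\Delta_z, X_z] = -D_z$ (with $\Delta_z = \sum_j\partial_{z_j}\partial_{\bar z_j}$, up to a normalisation factor) and $[D_z, X_z] = 0$, from which $\Delta_z X_z^\ell M = 0$ follows at once by expanding $[\Delta_z, X_z^\ell]$ as a telescoping sum. You instead factorise $\Delta = -2[D_z, D_z^\dagger]$ and compute $[D_z^\dagger, X_z] = -\tfrac{i}{2}\mathcal{K}$ with $\mathcal{K} = \mathcal{O} + \mathbb{E} + n$, then kill $D_z D_z^\dagger X_z^\ell M$ and $D_z^\dagger D_z X_z^\ell M$ separately. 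The two routes are connected by the Jacobi identity, since
\begin{align*}
[\Delta_z, X_z] \ \propto\ [[D_z, D_z^\dagger], X_z] = \big[D_z, [D_z^\dagger, X_z]\big] - \big[D_z^\dagger, [D_z, X_z]\big] = -\tfrac{i}{2}[D_z, \mathcal{K}]\ \propto\ D_z\ ,
\end{align*}
so in effect you are re-deriving the paper's key commutator from the $\mathfrak{su}(1,2)$ bracket table rather than observing it directly. What your route buys is the explicit scalar $\mathcal{K}|_{\mathcal{P}_{a,b}\otimes\mathbb{S}^\infty_{(r)}} = 2b - 2r$ and the recursion $D_z^\dagger X_z^\ell M = \lambda X_z^{\ell-1}M$, which is actually more information than the lemma requires (and would be useful later when normalising embedding factors); the cost is that you carry along the Dolbeault factorisation and the weight-shift identity $[\mathcal{K}, X_z] = 4X_z$, neither of which the paper needs for this particular statement. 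One small stylistic caveat: you cite $[D_z, X_z] = 0$ as \emph{equivalent} to $D_sM = D_tM = 0$ holding for $h$-monogenics, but strictly speaking you only use the implication $D_zM = D_z^\dagger M = 0$; the equivalence is true and trivial, but is not needed.
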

\begin{proof}
	It is clear that the operator $X_z^\ell$ raises the bi-degree from $(a,b-\ell)$ to $(a,b)$, and that the spinor-degree is lowered from $r+\ell$ to $r$. It thus suffices to show that the result is indeed harmonic. To that end, we note that $[\Delta_z,X_z] = -D_z$ (where we have put $\Delta_z = \sum \partial_{z_j}\bar{\partial}_{z_j}$) and $[D_z,X_z] = 0$. A similar calculation for the operator $X_z^\dagger$ then proves the lemma. 
\end{proof}
It is now tempting to think that one has a handle on all embedding factors, through combinations of powers of $X_z$ and $X_z^\dagger$. Unfortunately, the situation is slightly more complicated, because a simple calculation shows that for instance the product operator $X_zX_z^\dagger$ cannot be used to embed $\mcM_{a,b}^{(r)}$ into the space of $\mathbb{S}^\infty_{(r)}$-valued harmonics. It turns out that one needs a slightly more complicated algebraic structure to characterise the embedding factors, expressed in terms of a so-called transvector algebra. Note that this (quadratic) algebra already appeared in the setting of Howe dualities in harmonic analysis for the Laplace operator, see for instance \cite{DBER}. The construction of this type of algebra hinges upon the existence of a reductive subalgebra inside a Lie algebra, hence the following result:  
\begin{lemma}
	The Lie subalgebra $\mathfrak{sl}(2)$ is reductive in $\mathfrak{su}(1,2)$. 
\end{lemma}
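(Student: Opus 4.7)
The plan is to fix a distinguished copy of $\mathfrak{sl}(2)$ inside $\mathfrak{su}(1,2)$ --- the one generated by the triple $\{\Delta, r^2, \mathbb{E}+n\}$, which under the matrix dictionary of Theorem \ref{su_operatorform} corresponds to the span $\langle H_1, X_3, Y_3\rangle$ --- and then to decompose $\mathfrak{su}(1,2)$ under its adjoint action as a direct sum of finite-dimensional $\mathfrak{sl}(2)$-irreducibles. Since reductivity is by definition complete reducibility of the adjoint representation $\operatorname{ad}\colon \mathfrak{sl}(2)\to \mathfrak{gl}(\mathfrak{su}(1,2))$, such a decomposition is exactly what needs to be exhibited.

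The weights under $\operatorname{ad} H_1$ can be read off directly from the commutator table preceding Theorem \ref{su_operatorform}: the generators $H_1, X_3, Y_3$ carry weights $0, 2, -2$ and together span the adjoint module $V_3$; the element $H_2$ has weight $0$; while $X_1, X_2$ have weight $+1$ and $Y_1, Y_2$ have weight $-1$. Using the table entries $[X_3, H_2]=[Y_3, H_2]=0$, the vector $H_2$ spans a trivial one-dimensional summand $V_1$. The relations $[X_3, Y_2]=-X_1$, $[Y_3, X_1]=Y_2$, together with $[X_3, X_1]=0=[Y_3, Y_2]$, show that $\operatorname{span}\{X_1, Y_2\}$ is a standard module $V_2$, and the analogous identities $[X_3, Y_1]=-X_2$, $[Y_3, X_2]=Y_1$ (with the corresponding vanishings) do the same for $\operatorname{span}\{X_2, Y_1\}$. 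Altogether one obtains
\begin{equation*}
\mathfrak{su}(1,2)\;\cong\;V_3\oplus V_1\oplus V_2\oplus V_2
\end{equation*}
as $\mathfrak{sl}(2)$-modules, where $V_d$ denotes the irreducible representation of dimension $d$, which settles reductivity.

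Alternatively the whole argument collapses to one line: $\mathfrak{sl}(2)$ is semisimple and $\mathfrak{su}(1,2)$ is an $8$-dimensional $\mathfrak{sl}(2)$-module, so Weyl's complete reducibility theorem applies directly. I would nevertheless prefer to present the explicit decomposition, because the complementary submodule $V_1 \oplus V_2 \oplus V_2$ is precisely the ambient space from which the generators of the transvector algebra associated with the pair $(\mathfrak{su}(1,2), \mathfrak{sl}(2))$ must be drawn, and the weight vectors identified above are the natural candidates for building the embedding factors announced just before the lemma. No real obstacle is anticipated; the only care required is to keep signs in the commutator table consistent with the chosen $\mathfrak{sl}(2)$-triple and to verify that the four weight $\pm 1$ vectors really do pair up into two copies of $V_2$ rather than some mixed configuration.
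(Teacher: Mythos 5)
Your proposal is correct and follows the same underlying strategy as the paper: identify $\mathfrak{sl}(2)=\operatorname{span}\{H_1,X_3,Y_3\}\leftrightarrow\operatorname{span}\{\mathbb{E}+n,-\tfrac{i}{2}r^2,\tfrac{i}{2}\Delta\}$, exhibit the complementary subspace $\mathfrak{t}=\operatorname{span}\{H_2,X_1,X_2,Y_1,Y_2\}$, and observe that $\mathfrak{t}$ is invariant under $\operatorname{ad}(\mathfrak{sl}(2))$, which you read off from the commutator table exactly as the paper implicitly does via Theorem~\ref{su_operatorform}. You go a step further by organising $\mathfrak{t}$ into irreducibles $V_1\oplus V_2\oplus V_2$ (and noting the Weyl complete-reducibility shortcut), which the paper leaves unstated but is a welcome refinement, especially since it pins down the weight vectors from which the transvector generators are later built.
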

\begin{proof}
	We need to prove that $\mathfrak{su}(1,2)$ decomposes as $\mathfrak{sl}(2)\oplus \mathfrak{t}$, such that the subspace $\mathfrak{t}$ caries the adjoint action of $\mathfrak{sl}(2)$. If we now define $\mathfrak{t} := \textup{span}_\R\big(\mathcal{O},X_s,X_t,D_s,D_t\big)$ and $\mathfrak{sl}(2)$ as the subalgebra generated by the remaining operators $\Delta, r^2$ and $\mathbb{E} + n$, the result follows from instance from the isomorphism constructed in Theorem \ref{su_operatorform}. 
\end{proof}
\par
For the most general definition of a transvector algebra, or the (intimitely related) Mickelsson algebra, we refer to the literature: see for instance \cite{Mi,Zh}. In the present paper we will proceed by constructing this algebra through its generators, which is typically done in terms of the extremal projection operator. Denoting our transvector algebra by means of $\mathcal{Z}(\mathfrak{su}(1,2),\mathfrak{sl}(2))$, we note that its generators are constructed in terms of the extremal projector 
\begin{align*}
\pi_{\mathfrak{sl}(2)} &:= 1 + \sum_{j=1}^{\infty}\frac{(-1)^j}{j!} \frac{\Gamma(H+2)}{\Gamma(H+2+j)}Y^jX^j\ ,
\end{align*}
whereby the (suitably normalised) operators $X, Y$ and $H$ are respectively given by $X = \frac{1}{2}\Delta$, $Y = -\frac{1}{2}r^2$ and $H = -(\mathbb{E} + n)$. Note that there is no need to care about convergence issues for the formal series above, since we will never need more than a finite number of terms. Also the factors in the denominator are harmless, because the previous observation will always allow us to get rid of the denominators after multiplication with a suitable factor (technically speaking, this amounts to going from a transvector to a Mickelsson algebra). The upshot is that although the operators in the subalgebra $\mathfrak{t} \subset \mathfrak{su}(1,2)$ do not map harmonics to harmonics, one does have that $\pi_{\mathfrak{sl}(2)}(\mathfrak{t}) \subset \mbox{End}(\ker(\Delta))$. So the operators in $\pi_{\mathfrak{sl}(2)}(\mathfrak{t})$ and products thereof will be our embedding factors. Let us then prove the following technical result:
\begin{lemma}\label{xsxtcommute}
	\leavevmode 
	\begin{enumerate}
		\item The transvector projections of the operators $X_s$ and $X_t$, acting on harmonic functions, can be rescaled using the factor $H+2$, such that 
		\begin{align*}
		-(H+2)\pi_{\mathfrak{sl}(2)}X_s &= \left(YX - (H+2)\right)X_s = (\mathbb{E} + n - 2)X_s - \frac{1}{2}r^2 D_t\\
		-(H+2)\pi_{\mathfrak{sl}(2)}X_t &= \left(YX - (H+2)\right)X_t = (\mathbb{E} + n - 2)X_t + \frac{1}{2}r^2 D_s\ .
		\end{align*}
		In other words, only the first two terms of the extremal projection operator $\pi_{\mathfrak{sl}(2)}$ are required to do the projection. 
		\item These rescaled operators commute in $\operatorname{End}(\ker(\Delta))$.
	\end{enumerate}
\end{lemma}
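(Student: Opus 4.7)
The plan is to exploit the harmonicity of the functions on which these operators act in order to truncate the extremal projector, and then verify commutativity by a direct computation using the $\mathfrak{su}(1,2)$-commutation table from Theorem~\ref{su_operatorform}.

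\emph{For part (i),} I apply $\pi_{\mathfrak{sl}(2)}X_s$ to an arbitrary harmonic $\phi$ and show that only the first two summands of the series defining $\pi_{\mathfrak{sl}(2)}$ contribute. Translating the identifications of Theorem~\ref{su_operatorform} back through the commutator table, one obtains $[\Delta, X_s] = 2D_t$ (from $[X_2,Y_3]=-Y_1$) and $[\Delta, D_t] = 0$ (from $[Y_3,Y_1]=0$). Hence on $\ker(\Delta)$ one has $X X_s\phi = \tfrac{1}{2}[\Delta,X_s]\phi = D_t\phi$, and then $X^2 X_s\phi = \tfrac{1}{2}\Delta D_t\phi = \tfrac{1}{2}D_t\Delta\phi = 0$, so that $X^j X_s\phi = 0$ for every $j\geq 2$. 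Only the $j=0$ and $j=1$ terms of $\pi_{\mathfrak{sl}(2)}$ survive, and left-multiplying by $-(H+2)$ cancels exactly the rational coefficient $\frac{\Gamma(H+2)}{\Gamma(H+3)}=\frac{1}{H+2}$. Substituting back $H=-(\mathbb{E}+n)$, $Y=-\tfrac{1}{2}r^2$ and $XX_s\phi = D_t\phi$ yields the first displayed formula. The argument for $X_t$ is identical, only with $[\Delta,X_t]=-2D_s$ (from $[X_1,Y_3]=-Y_2$), which flips the sign of the second term.

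\emph{For part (ii),} I would compute $[\widetilde X_s,\widetilde X_t]\phi$ directly for $\phi\in\ker(\Delta)$, where $\widetilde X_s = (\mathbb{E}+n-2)X_s - \tfrac{1}{2} r^2 D_t$ and $\widetilde X_t = (\mathbb{E}+n-2)X_t + \tfrac{1}{2} r^2 D_s$. Expanding bilinearly yields four commutator terms, each of which I would simplify using the scalar actions $[\mathbb{E},X_s]=X_s$, $[\mathbb{E},X_t]=X_t$, $[\mathbb{E},r^2]=2r^2$, $[\mathbb{E},D_s]=-D_s$, $[\mathbb{E},D_t]=-D_t$ together with the $\mathfrak{su}(1,2)$-commutators $[X_s,X_t]=-ir^2$, $[D_s,D_t]=-i\Delta$, $[r^2,D_s]=2X_t$ and $[r^2,D_t]=-2X_s$ (all derivable from the table via the identifications of Theorem~\ref{su_operatorform}). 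The expected cancellation pattern is that the contribution $(\mathbb{E}+n-1)(\mathbb{E}+n-2)[X_s,X_t] = -i(\mathbb{E}+n-1)(\mathbb{E}+n-2)r^2$ produced by the first commutator is balanced exactly by the $r^2$-contributions coming from the mixed terms $\tfrac{1}{2}[(\mathbb{E}+n-2)X_s,\,r^2 D_s]$ and $-\tfrac{1}{2}[r^2 D_t,\,(\mathbb{E}+n-2)X_t]$, while every remaining contribution contains a factor of $\Delta$ that can be moved to the right to act on $\phi$ and therefore vanishes.

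The main obstacle is the bookkeeping in part (ii): several commutators must be iterated carefully, and each stray $\Delta$ has to be pushed past all of $X_s, X_t, r^2, D_s, D_t$ in order to land on $\Delta\phi = 0$, with the scalar factors from the $\mathbb{E}$-commutators aligning exactly. If the direct computation becomes too intricate, I would instead appeal to the structural observation that $[X_s,X_t]=-ir^2$ lies in the reductive subalgebra $\mathfrak{sl}(2)\subset\mathfrak{su}(1,2)$ we are extremally projecting away from; in the standard Mickelsson-transvector formalism this is precisely the condition that ensures the corresponding projected generators $\pi_{\mathfrak{sl}(2)}X_s$ and $\pi_{\mathfrak{sl}(2)}X_t$ commute modulo a left multiple of $Y$, which becomes trivial on $\ker(\Delta)$ after rescaling by $-(H+2)$, yielding the commutativity of $\widetilde X_s$ and $\widetilde X_t$ on harmonic functions.
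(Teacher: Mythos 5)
Your part (i) is essentially identical to the paper's: both use $[\Delta,[\Delta,X_s]]=0$ and harmonicity to truncate the extremal projector after $j=1$, then cancel $\Gamma(H+2)/\Gamma(H+3) = (H+2)^{-1}$ against the rescaling; your commutator lookups $[\Delta,X_s]=2D_t$, $[\Delta,X_t]=-2D_s$, $[\Delta,D_s]=[\Delta,D_t]=0$ are all correct.

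For part (ii) your primary route --- a direct bilinear expansion of $[\widetilde X_s,\widetilde X_t]$ on $\ker(\Delta)$ --- is genuinely different from the paper's. The paper instead applies the identity $[AB,CD]=A[B,C]D+[A,C]BD+CA[B,D]+C[A,D]B$ to $[\pi X_s,\pi X_t]$, isolates $\pi^2[X_s,X_t]\sim \pi r^2$ and $\pi[X_s,\pi]\sim \pi r^2(\cdots)$, kills all three terms via the defining property $\pi_{\mathfrak{sl}(2)}r^2=0$, and only then rescales. Your ``fallback'' paragraph is exactly this argument, so you have both. Your direct expansion does close (I checked: the $r^2$-coefficients from $[EX_s,EX_t]$ cancel exactly against the ones produced by the two mixed terms, and the $r^2X_sD_s$, $r^2X_tD_t$ pieces cancel between the mixed terms and $[r^2D_t,r^2D_s]$, with only $r^4\Delta$ surviving to die on $\phi$), but your stated ``expected cancellation pattern'' is imprecise on two counts: the surviving $r^2$-prefactor is $(\mathbb{E}+n)(\mathbb{E}+n-1)$ (equivalently $(\mathbb{E}+n-2)(\mathbb{E}+n-3)$ if you keep $r^2$ on the right), not $(\mathbb{E}+n-1)(\mathbb{E}+n-2)$; and it is \emph{not} true that ``every remaining contribution contains a factor of $\Delta$'' --- the $r^2X_sD_s$ and $r^2X_tD_t$ contributions do not, and they cancel purely algebraically between the three non-leading brackets. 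These are bookkeeping slips, not structural errors. The structural remark at the end ($[X_s,X_t]$ lands in the projected-away $\mathfrak{sl}(2)$, hence the projections commute modulo $\pi r^2=0$) is the cleanest formulation and is what actually makes the paper's proof short; you may prefer to lead with it rather than treat it as a fallback.
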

\begin{proof}
	The first statement (i) follows from the fact that $[\Delta,[\Delta,X_s]]=0$ (and similarly for $X_t$). This thus means that the infinite sum in the extremal projector reduces to a finite sum of two terms only. The explicit expression then follows from straightforward calculations, hereby taking into account that everything is to be seen as an operator acting on $\ker(\Delta)$. For the second statement (ii) we will make use of the commutator equality
$
	[AB,CD] = A[B,C]D+[A,C]BD+CA[B,D]+C[A,D]B$
which allows us to say that
	\begin{align*}
	[\pi_{\mathfrak{sl}(2)}X_s,\pi_{\mathfrak{sl}(2)}X_t] &= \pi_{\mathfrak{sl}(2)}[X_s,\pi_{\mathfrak{sl}(2)}]X_t+ \pi_{\mathfrak{sl}(2)}^2[X_s,X_t]+\pi_{\mathfrak{sl}(2)}[\pi_{\mathfrak{sl}(2)},X_t]X_s\ .
	\end{align*}
	Note that we can commute any factor involving the Euler operator $\mathbb{E}$ through the product $r^2\Delta$, because it is homogeneous of degree zero, and that $[r^2,X_s] = 0$ such that 
	\begin{align*}
	[\pi_{\mathfrak{sl}(2)},X_s] &= -\frac{1}{4}\left[(\mathbb{E} + n - 2)^{-1}r^2\Delta,X_s\right] = -\frac{1}{4}r^2\left[\Delta(\mathbb{E} + n - 2)^{-1}r^2,X_s\right]\ .
	\end{align*}
	This is enough to see why $\pi_{\mathfrak{sl}(2)}[X_s,\pi_{\mathfrak{sl}(2)}] = 0$ (and similarly for the last term in the righ-hand side above). Indeed, we can hereby make use of the fact $\pi_{\mathfrak{sl}(2)}r^2=0$, which is one of the defining properties for the extremal projection operator. This also gets rid of the middle term, as $[X_s,X_t] \sim r^2$. Finally, it suffices to multiply the (trivial) commutator $[\pi_{\mathfrak{sl}(2)}X_s,\pi_{\mathfrak{sl}(2)}X_t]$ from the left with $(H+2)(H+1)$ to see that also the rescaled operators will commute. 
\end{proof}
One can obviously also consider the projections of $X_z$ and $X_z^\dagger$. Briefly denoting the rescaled projection operator by means of $\tilde{\pi}_{\mathfrak{sl}(2)} = -(H+2)\pi_{\mathfrak{sl}(2)}$, we get the following: 
\begin{definition}
	The operators $\mathbb{X}_z$ and $\mathbb{X}_z^{\dagger}$ are defined as follows:
	\begin{align*}
	\mathbb{X}_z &:= \tilde{\pi}_{\mathfrak{sl}(2)}X_z = (\mathbb{E} + n - 2)X_z + \frac{i}{2}|\uz|^2D_z\\
	\mathbb{X}_z^{\dagger} &:= \tilde{\pi}_{\mathfrak{sl}(2)}X_z^\dagger = (\mathbb{E} + n - 2)X_z^\dagger - \frac{i}{2}|\uz|^2D_z^\dagger\ .
	\end{align*}
\end{definition}
In view of Lemma \ref{xsxtcommute} it is clear that also $[\mathbb{X}_z,\mathbb{X}_z^{\dagger}]=0$. In particular, one can thus consider the polynomial algebra $\C[\mathbb{X}_z,\mathbb{X}_z^\dagger]$ in these operators, defining a subalgebra of the endomorphism algebra End$(\ker(\Delta))$. Note that in switching from $(X_s,X_t)$ to $(X_z,X_z^\dagger)$ we introduced a complex unit, which means that $\C[\mathbb{X}_z,\mathbb{X}_z^\dagger]$ should be seen as a subalgebra of the complexified transvector algebra $Z(\mathfrak{su}(1,2),\mathfrak{sl}(2)) \otimes \C$ here. Monomials in the algebra $\C[\mathbb{X}_z,\mathbb{X}_z^\dagger]$ are precisely the embedding operators we are after, hence the following definition: 
\begin{definition}
	For all positive integers $k, \ell \in \mathbb{N}$ we define the maps
	\begin{align*}
	\mathcal{I}_{k,\ell} := \mathbb{X}^k (\mathbb{X}^\dagger)^\ell \in \mbox{End}\big(\ker(\Delta)\big)\ ,  
	\end{align*}
	whereby $\mathcal{I}_{0,0} = \textup{Id}$ is the identity map. Each of these maps is invariant with respect to $\mathfrak{u}(n)$, in the sense that $\mathcal{I}_{k,\ell}$ commutes with all the generators. 
\end{definition}
It is clear that the map $\mathcal{I}_{k,\ell}$ raises the bi-degree of homogeneity in $(\uz,\uz^\dagger)$ from $(a,b)$ to $(a+\ell,b+k)$, and it maps spinors in $\mathbb{S}^\infty_{(r)}$ to elements in $\mathbb{S}^\infty_{(r - k + \ell)}$. As such, these maps can indeed be used as embedding maps: 
\begin{align*}
\mathcal{I}_{k,\ell} : \mathcal{M}_{a-\ell,b-k}^{(r+k-\ell)}(\R^{2n}) \longrightarrow \mcH_{a,b}(\R^{2n},\C) \otimes \mathbb{S}^{\infty}_{(r)}\ ,
\end{align*} 
where we momentarily ignore the conditions on the parameters (depending on the values of the parameters, the space at the left-hand side of the arrow may not exist). To finish the goal we have set right before Remark \ref{br_condition}, the following will come in handy: 
\begin{lemma}
	For all $(k,\ell) \neq (0,0)$, we have the following: 
	\begin{align*}
	\mathcal{I}_{k,\ell} : \ker(D_s) \cap \ker(D_t) \longrightarrow \big(\ker(D_s) \cap \ker(D_t)\big)^\perp\ ,
	\end{align*}
	whereby the orthogonality is expressed with respect to the Fischer inner product, defined on $\mathcal{P}(\R^{2n},\C) \otimes \mathbb{S}^\infty$ by means of
	\begin{align*}
	\langle P(\ux,\uy) \otimes \psi(\underline{q}) , Q(\ux,\uy) \otimes \phi(\underline{q}) \rangle := \big(P(\up_y,-\up_x)Q(\ux,\uy)\big)\bigg\vert_{x=y=0}\int_{\R^n}\overline{\psi}(\underline{q})\phi(\underline{q})d\underline{q}\ .
	\end{align*}
\end{lemma}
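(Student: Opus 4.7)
The plan is to show, via the $\mathfrak{u}(n)$-invariance of the Fischer inner product, that the image $\mathcal{I}_{k,\ell}M$ lives in $\mathfrak{u}(n)$-isotypic components of $\mcH_{a,b}\otimes\mathbb{S}^{\infty}_{(r)}$ which are distinct from the one containing the h-monogenics, and then to conclude orthogonality via Schur's lemma.

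First I would verify that $\mathcal{I}_{k,\ell}$ is a $\mathfrak{u}(n)$-intertwiner. By Lemma \ref{u-realisation} and the Corollary following it, each of $X_s,X_t,D_s,D_t$, together with the invariants $r^2$ and $\mathbb{E}$, commutes with the generators $A_{jk},B_{jj},C_{jk}$. Consequently so do the complex combinations $X_z,X_z^{\dagger},D_z,D_z^{\dagger}$, the operators $\mathbb{X}_z=(\mathbb{E}+n-2)X_z+\tfrac{i}{2}r^2D_z$ and $\mathbb{X}_z^{\dagger}=(\mathbb{E}+n-2)X_z^{\dagger}-\tfrac{i}{2}r^2D_z^{\dagger}$, and every polynomial in them, in particular $\mathcal{I}_{k,\ell}$.

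Next I would trace the image. Let $M$ be h-monogenic and, without loss of generality, bi-homogeneous of type $(a',b')$ and spinor-degree $r'$. Since $\mathcal{I}_{k,\ell}$ raises the bi-degree by $(\ell,k)$ and shifts the spinor-degree by $\ell-k$, $\mathcal{I}_{k,\ell}M$ lies in $\mcH_{a,b}\otimes\mathbb{S}^{\infty}_{(r)}$ with $(a,b,r)=(a'+\ell,b'+k,r'-k+\ell)$. By Theorem \ref{tensorP}, the source space $\mcH_{a',b'}\otimes\mathbb{S}^{\infty}_{(r')}$ splits multiplicity-freely into irreducibles with highest weights $(a'-j'-\tfrac{1}{2},-\tfrac{1}{2},\dots,-i'-\tfrac{1}{2},i'+j'-b'-r'-\tfrac{1}{2})$ for admissible $(i',j')$; since $\mathcal{I}_{k,\ell}$ preserves highest weights, each nonzero irreducible component of $M$ is sent into the summand of $\mcH_{a,b}\otimes\mathbb{S}^{\infty}_{(r)}$ indexed by $(i,j)=(i',j'+\ell)$. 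The h-monogenic constituent of this target is $\mcM_{a,b}^{(r)}$, corresponding to $(i,j)=(b,0)$; enforcing $(i',j'+\ell)=(b,0)$ requires $\ell=0$ (using $j'\geq 0$) and then $i'=b=b'+k$, which violates the constraint $i'\leq b'$ whenever $k>0$. Hence for $(k,\ell)\neq(0,0)$ every nonzero component of $\mathcal{I}_{k,\ell}M$ sits in a summand strictly different from the h-monogenic one.

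Polynomials of distinct bi-degree or spinor-degree are Fischer-orthogonal by inspection of the definition, so orthogonality only needs to be checked inside the fixed space $\mcH_{a,b}\otimes\mathbb{S}^{\infty}_{(r)}$; there, $\mathfrak{u}(n)$-invariance of the Fischer inner product combined with Schur's lemma forces non-isomorphic isotypic components to be mutually orthogonal, yielding $\mathcal{I}_{k,\ell}M\perp \mcM_{a,b}^{(r)}$ as required. The main obstacle is the verification of $\mathfrak{u}(n)$-invariance of the Fischer inner product: the spinor factor $\int \bar\psi\phi\,d\underline{q}$ is preserved because the $\mathfrak{u}(n)$-realisation of Lemma \ref{u-realisation} acts by skew-adjoint operators on $L^2(\R^n,\C)$ (reflecting the fact that $\U(n)$ embeds in the metaplectic group through unitary operators), while the polynomial pairing $P(\up_y,-\up_x)Q|_{\ux=\uy=0}$ must be re-expressed in the complex coordinates $\uz=\ux+i\uy$ to be recognised as a nonzero scalar multiple of the standard $\U(n)$-invariant Fock pairing on $\mcP(\C^n,\C)$.
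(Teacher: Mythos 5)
Your argument takes a genuinely different route from the paper's, and unfortunately it has a circularity. You assert that ``the $h$-monogenic constituent of this target is $\mcM_{a,b}^{(r)}$, corresponding to $(i,j)=(b,0)$'', and then conclude $\mathcal{I}_{k,\ell}M\perp\mcM_{a,b}^{(r)}$. But at this point in the paper only the containment $\mcM_{a,b}^{(r)}\subset\ker(D_s)\cap\ker(D_t)$ has been established (via the explicit highest weight vector $w_{a,b;r}$); the reverse containment --- that the $h$-monogenics inside $\mcH_{a,b}\otimes\mathbb{S}^{\infty}_{(r)}$ consist of that single irreducible summand and nothing more --- is precisely what the \emph{next} lemma proves, and that proof \emph{uses} the present orthogonality statement. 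So your argument proves only $\mathcal{I}_{k,\ell}M\perp\mcM_{a,b}^{(r)}$, which is a priori weaker than the claimed $\mathcal{I}_{k,\ell}M\perp\big(\ker(D_s)\cap\ker(D_t)\big)$. If the $h$-monogenics hypothetically occupied another summand at some $(i_0,j_0)\neq(b,0)$, your Schur argument would not rule out the image $\mathcal{I}_{k,\ell}M$ landing there.

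The paper avoids this entirely by a direct Fischer-adjoint argument: $X_s$ and $X_t$ are the Fischer adjoints of $D_s$ and $D_t$, so on an $h$-monogenic (hence harmonic) target $N$ one gets $\mathbb{X}_z^{\ast}N=0$ and $(\mathbb{X}_z^{\dagger})^{\ast}N=0$, and therefore $\langle\mathcal{I}_{k,\ell}M,N\rangle=\langle M,(\mathcal{I}_{k,\ell})^{\ast}N\rangle=0$ for any $N\in\ker(D_s)\cap\ker(D_t)$, without needing to know the isotypic structure of the $h$-monogenics in advance. Your intertwiner/highest-weight bookkeeping and the discussion of $\mathfrak{u}(n)$-invariance of the Fischer pairing are correct as far as they go, but you should replace the appeal to ``the $h$-monogenic constituent is $\mcM_{a,b}^{(r)}$'' by the adjointness of $(X_s,D_s)$ and $(X_t,D_t)$ to close the gap.
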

\begin{proof}
	Because the operator $\mathcal{I}_{k,\ell}$ is a monomial in the operators $\mathbb{X}_z$ and $\mathbb{X}_z^\dagger$ (note that we exclude the identity operator $\mathcal{I}_{0,0}$ here), it is sufficient to investigate $\mathcal{I}_{1,0}$ and $\mathcal{I}_{0,1}$ separately. These operators are projections (up to a multiplicative constant, coming from the Euler factor $H+n$) of suitable linear combinations of $X_s$ and $X_t$, so we can even restrict our attention to these operators instead. The result then follows from the fact that $X_s$ (resp. $X_t$) is the Fischer adjoint of $D_s$ (resp. $D_t$). 
\end{proof}
This allows us to prove the following: 
\begin{lemma}
	For all positive integers $b \leq r$, the space $\mathcal{M}_{a,b}^{(r)}(\R^{2n})$ can be defined as the space of $\mathbb{S}^\infty_{(r)}$-valued $h$-monogenic polynomials on $\R^{2n}$ of bi-degree $(a,b)$. 
\end{lemma}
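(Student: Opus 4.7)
The plan is to prove both inclusions of the claimed equality. The inclusion $\mathcal{M}_{a,b}^{(r)}(\R^{2n}) \subseteq \mathcal{P}_{a,b}(\R^{2n},\mathbb{S}^\infty_{(r)}) \cap \ker(D_s) \cap \ker(D_t)$ has essentially been established in the discussion preceding the statement: the explicit highest weight vector $w_{a,b;r}(\ux,\uy;\underline{q})$ is annihilated by both $D_z$ and $D_z^\dagger$, hence by $D_s$ and $D_t$, and the corollary following Lemma \ref{u-realisation} ensures that $\ker(D_s) \cap \ker(D_t)$ is $\mathfrak{u}(n)$-invariant. Therefore $h$-monogenicity propagates from $w_{a,b;r}$ to the entire irreducible module generated by it under the action of the negative root operators, which is exactly $\mathcal{M}_{a,b}^{(r)}(\R^{2n})$.

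For the reverse inclusion, I would start with an arbitrary $P \in \mathcal{P}_{a,b}(\R^{2n},\mathbb{S}^\infty_{(r)}) \cap \ker(D_s) \cap \ker(D_t)$. The relation $[D_s,D_t] = -i\Delta$ forces $P$ to be harmonic, hence $P \in \mcH_{a,b}(\R^{2n},\C) \otimes \mathbb{S}^\infty_{(r)}$. My next step is to realise the abstract decomposition of Theorem \ref{tensorP} concretely through the embedding maps $\mathcal{I}_{k,\ell} = \mathbb{X}_z^k (\mathbb{X}_z^\dagger)^\ell$: matching the shifts in bi-degree and spinor-homogeneity with the highest weights listed in Theorem \ref{tensorP}, each summand there coincides with $\mathcal{I}_{k,\ell}\bigl(\mathcal{M}_{a-\ell,b-k}^{(r+k-\ell)}(\R^{2n})\bigr)$ for a unique admissible pair $(k,\ell)$, and the dimensional identity preceding the statement guarantees that these images form a direct sum exhausting the whole space. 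In particular, $P$ decomposes uniquely as $P = M_{0,0} + \sum_{(k,\ell) \neq (0,0)} \mathcal{I}_{k,\ell}(M_{k,\ell})$ with $M_{k,\ell} \in \mathcal{M}_{a-\ell,b-k}^{(r+k-\ell)}(\R^{2n})$.

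The closing step exploits the Fischer orthogonality lemma established just before. By the forward inclusion already proved, each $M_{k,\ell}$ is itself $h$-monogenic, so in particular $M_{0,0}$ is $h$-monogenic, and consequently $P - M_{0,0} = \sum_{(k,\ell) \neq (0,0)} \mathcal{I}_{k,\ell}(M_{k,\ell})$ remains $h$-monogenic. On the other hand, each individual term of this sum lies in the Fischer-orthogonal complement of $\ker(D_s) \cap \ker(D_t)$, and since the complement is a linear subspace, so does the full sum. Hence $P - M_{0,0}$ is simultaneously $h$-monogenic and Fischer-orthogonal to every $h$-monogenic, and positive definiteness of the Fischer inner product forces $P = M_{0,0} \in \mathcal{M}_{a,b}^{(r)}(\R^{2n})$, which finishes the argument.

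The main obstacle I anticipate is the concrete identification of the embedding images with the abstract summands of Theorem \ref{tensorP}. Specifically, one must verify that $\mathcal{I}_{k,\ell}$ is non-zero on the relevant $\mathcal{M}$-space (injectivity then follows from irreducibility of the $\mathfrak{u}(n)$-action) and that its image carries the correct highest weight predicted by the theorem. This bookkeeping rests on the commutativity $[\mathbb{X}_z,\mathbb{X}_z^\dagger] = 0$ from Lemma \ref{xsxtcommute}, together with the fact that the rescaling factor $\tilde{\pi}_{\mathfrak{sl}(2)} = -(H+2)\pi_{\mathfrak{sl}(2)}$ acts as a non-zero scalar on each isotypic component of bi-degree $(a,b)$. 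Granted this correspondence, the Fischer orthogonality argument above runs cleanly and yields the required characterisation.
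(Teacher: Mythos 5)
Your proof is correct and follows essentially the same route as the paper's: harmonicity forces $P$ into the tensor product $\mcH_{a,b}\otimes\mathbb{S}^\infty_{(r)}$, the decomposition of Theorem \ref{tensorP} realised through the embedding maps $\mathcal{I}_{k,\ell}$ writes $P$ as a sum over the admissible $(k,\ell)$, and Fischer orthogonality kills every component except $\mathcal{I}_{0,0}$. You spell out the final step (positive definiteness / non-degeneracy of the Fischer pairing on $h$-monogenics) more explicitly than the paper's one-sentence conclusion, but the underlying argument is identical.
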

\begin{proof}
	Since any $\mathbb{S}^{\infty}_{(r)}$-valued polynomial $P_{a,b}(\ux,\uy;\underline{q})$ of bi-degree $(a,b)$ in the kernel of the operators $D_s$ and $D_t$ is also harmonic, we know that it must belong to one of the spaces appearing at the right-hand side of Theorem \ref{tensorP}, embedded using one of the mappings $\mathcal{I}_{k,\ell}$ constructed above. However, we have just shown that the only map that preserves the property of being symplectic $h$-monogenic is $\mathcal{I}_{0,0}$, which means that $P_{a,b}(\ux,\uy;\underline{q}) \in \mathcal{M}_{a,b}^{(r)}(\R^{2n})$. 
\end{proof}
Putting everything together then leads to the following refinement of Theorem \ref{tensorP}: 
\begin{theorem}
	[Symplectic Hermitian Fischer decomposition] We have the following explicit decomposition for the space of spinor-valued harmonics:
	\begin{align*}\label{symplectic_HFD}
	\mathcal{H}_{a,b}(\mathbb{C}^n,\mathbb{C}) \otimes \mathbb{S}_{(r)}^{\infty} &= \bigoplus_{j = 0}^a \bigoplus_{i = 0}^b \mathcal{I}_{b-i,j}\mathcal{M}_{a-j,i}^{(b+r - (i+j))}(\R^{2n})\ ,
	\end{align*}
	where the indices are constrained by the condition $i + j \leq r$. This is a decomposition of $\mathbb{S}^\infty_{(r)}$-valued harmonics into symplectic $h$-monogenics. 
\end{theorem}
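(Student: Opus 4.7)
The plan is to promote the abstract $\mathfrak{u}(n)$-isomorphism supplied by Theorem \ref{tensorP} to a concrete equality inside $\mathcal{H}_{a,b}(\mathbb{C}^n,\C)\otimes \mathbb{S}^\infty_{(r)}$ by exhibiting every summand as the image of the embedding factor $\mathcal{I}_{b-i,j}$ acting on the corresponding space of $h$-monogenics.

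First I would do the book-keeping. For each admissible pair $(i,j)$ with $0 \leq j \leq a$, $0 \leq i \leq b$ and $i+j \leq r$, the summand appearing in Theorem \ref{tensorP} with highest weight $(a-j-\tfrac12,-\tfrac12,\ldots,-i-\tfrac12,i+j-b-r-\tfrac12)$ coincides by definition with the highest weight of $\mathcal{M}_{a-j,i}^{(b+r-(i+j))}$. The degree-shift formula for $\mathcal{I}_{k,\ell}$, evaluated at $k = b-i$ and $\ell = j$, places the image at bi-degree $(a,b)$ and spinor level $r$, while the harmonicity of the image is built in through the rescaled extremal projector $\tilde{\pi}_{\mathfrak{sl}(2)}$. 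The $\mathfrak{u}(n)$-equivariance of $\mathcal{I}_{b-i,j}$ follows from the $\mathfrak{u}(n)$-invariance of $X_z$ and $X_z^\dagger$ together with the fact that the rescaling ingredients $(\mathbb{E}+n-2)$, $|\uz|^2 D_z$ and $|\uz|^2 D_z^\dagger$ all commute with the generators of Lemma \ref{u-realisation}.

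The hard part will be establishing injectivity of the embedding, as everything else (directness, exhaustiveness) is cheap once this is in hand. Schur's lemma reduces injectivity to showing that $\mathcal{I}_{b-i,j}$ does not annihilate $\mathcal{M}_{a-j,i}^{(b+r-(i+j))}$ identically. The cleanest route is a global dimension count: Lemma \ref{dimensionformulaBarnes} together with the subsequent summation lemma expresses $\dim\big(\mathcal{H}_{a,b}(\R^{2n},\C)\otimes \mathbb{S}^\infty_{(r)}\big)$ as $\sum_{i,j}\dim \mathcal{M}_{a-j,i}^{(b+r-(i+j))}$ over the stated index range, so the vanishing of even a single embedding would make the total image a proper submodule, contradicting the completeness part of Theorem \ref{tensorP}. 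A more hands-on alternative is to apply $\mathbb{X}_z^{b-i}(\mathbb{X}_z^\dagger)^j$ to the explicit highest-weight vector $w_{a-j,i;b+r-(i+j)}(\ux,\uy;\underline{q})$ and track the nonzero Euler-eigenvalue factors $(\mathbb{E}+n-2)$ produced at each step to exhibit a nonvanishing result.

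The directness of the sum is automatic because distinct pairs $(i,j)$ yield distinct highest weights — the first slot $a-j-\tfrac12$ pins down $j$, after which the $(n-1)$-st slot $-i-\tfrac12$ pins down $i$ — so the images of different embeddings land in pairwise non-isomorphic $\mathfrak{u}(n)$-isotypic components and are linearly independent. Combining injectivity, directness, and Theorem \ref{tensorP} upgrades the abstract isomorphism to the asserted equality. The final clause — that this realizes a decomposition of $\mathbb{S}^\infty_{(r)}$-valued harmonics into symplectic $h$-monogenics — is then immediate from the characterization $\mathcal{M}_{a,b}^{(r)}(\R^{2n}) = \mathcal{P}_{a,b}(\R^{2n},\mathbb{S}^\infty_{(r)})\cap \ker(D_s)\cap \ker(D_t)$ established in the lemma preceding the theorem.
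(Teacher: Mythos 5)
Your proposal is correct and follows essentially the same route as the paper: invoke Theorem \ref{tensorP} for the abstract $\mathfrak{u}(n)$-decomposition, use the operators $\mathcal{I}_{k,\ell}\in\mathrm{End}(\ker\Delta)$ as embedding factors, and identify the summands with symplectic $h$-monogenics via the lemma preceding the theorem. You supply more of the scaffolding the paper leaves implicit (the weight/degree book-keeping, injectivity via Schur plus the dimension-count lemma, and directness from distinct highest weights), but the underlying argument is the same.
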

\begin{proof}
	The decomposition itself follows from \ref{tensorP}, together with the observation that the maps $\mathcal{I}_{k,\ell}$ belong to End$(\ker(\Delta))$ and therefore have the desired mapping properties. The last statement (i.e. the connection with symplectic $h$-monogenics) follows from the previous lemma. 
\end{proof}
Despite the obvious similarities between the theorem above and the classical Fischer decompositions (for the harmonic and monogenic case), there are also a few crucial differences. First of all, note that when the space $\mathcal{M}_{a,b}^{(r)}$ is included in the tensor product $\mcH_{a,b}(\R^{2n},\C) \otimes \mathbb{S}^\infty_{(r)}$, then this space is {\em not} given by the Cartan product 
\begin{align*}
(a,0,\ldots,0,-b) \boxtimes \mathbb{S}^\infty_{(r)} \cong  \left(a-\frac{1}{2},-\frac{1}{2},\ldots,-\frac{1}{2},-b-r-\frac{1}{2}\right)\ .
\end{align*}
This is clearly different from the aforementioned Fischer decompositions, in which the monogenics were always embedded as a Cartan product. Moreover, the vector space of ($r$-homogeneous) spinor-valued harmonic polynomials does not always contain symplectic $h$-monogenics. Put differently, for certain choices of indices $(a,b;r)$ one has that 
\begin{align*}
\mcH_{a,b}(\R^{2n},\C) \otimes \mathbb{S}^\infty_{(r)} \subset \big(\ker(D_s) \cap \ker(D_t)\big)^\perp\ .
\end{align*}
As a simple example, consider $\mcH_{0,1}(\R^{2n}) \otimes \mathbb{S}^\infty_{(0)} = \mathbb{X}_z \mathcal{M}_{0,0}^{(1)}$. This is reminiscent of the situation in (standard) Hermitean Clifford analysis, where one for instance has that $\mcH_{1,0}(\R^{2n}) \otimes \mathbb{S}_{(0)} = \uz \mcM_{0,0}^{(1)}$. 

\section{Conclusion and further research}\label{outlook}
In this paper we introduced a complex structure $\mathbb{J}$ on $\R^{2n}$ which gave rise to a symplectic version of Hermitian Clifford analysis, centered around the system of equations $D_sf = D_t f = 0$. In this new setting, the Lie algebra $\mathfrak{su}(1,2)$ then appeared naturally as a dual partner for the unitary group $\mathsf{U}(n)$. This leads to the first two rows in the overview below. In the classical case (middle column), the Hermitean system was further refined, leading to a system of 4 `quaternionic' invariant equations (governed by the group $\mathsf{Sp}(n) =\mathsf {Sp} (2n,\mathbb {C} )\cap \mathsf {U} (2n)$, see \cite{quater}). The symplectic analogue of this function theory, the empty slot in the overview, will be the topic of further investigation.
$$\begin{tabular}{c||c|c}
\hline
\bf Complex structures & \bf Orthogonal case &  \bf Symplectic case\\
\hline\hline
none & $\mathsf{SO}(n)\times \mathfrak{osp}(1|2)$ 
& \makecell{$\mathsf{Sp}(2n)\times \mathfrak{sl}(2)$\\ }
\\
\hline
K\"ahler $\mathbb{J}$ & $\mathsf{U}(n)\times \mathfrak{osp}(2|2)$
& $\mathsf{U}(n)\times \mathfrak{su}(1,2)$
\\
\hline 
Hyperk\"ahler $\mathbb{I,J,K}$ &  $\mathsf{Sp}(n)\times \mathfrak{osp}(4|2)$
&  ? \\
\hline
\end{tabular}$$
\subsection*{Acknowledgments}Guner Muarem was supported by the FWO-EoS project G0H4518N.
\nocite{*}
\bibliographystyle{abbrv}

\begin{thebibliography}{23}
	
	\bibitem{quater}
	F.~Brackx, H.~De~Schepper, D.~Eelbode, R.~L\'{a}vi\v{c}ka, and V.~Sou\v{c}ek.
	\newblock Fischer decomposition for $\mathfrak{osp}(4|2)$-monogenics in quaternionic Clifford analysis. 
	\newblock {\em Math. Methods Appl. Sci.} 39(16):4874--4891, 2016. 
	
	\bibitem{BDES}
	F.~Brackx, H.~De~Schepper, D.~Eelbode, and V.~Sou\v{c}ek.
	\newblock The {H}owe dual pair in {H}ermitean {C}lifford analysis.
	\newblock {\em Rev. Mat. Iberoam.}, 26(2):449--479, 2010.
	
	\bibitem{BL1}
	D.~J. Britten, J.~Hooper, and F.~W. Lemire.
	\newblock Simple {$C_n$} modules with multiplicities {$1$} and applications.
	\newblock {\em Canad. J. Phys.}, 72(7-8):326--335, 1994.
	
	\bibitem{BL2}
	D.~J. Britten and F.~W. Lemire.
	\newblock On modules of bounded multiplicities for the symplectic algebras.
	\newblock {\em Trans. Amer. Math. Soc.}, 351(8):3413--3431, 1999.
	
	\bibitem{DBER}
	H. De Bie, D. Eelbode and M. Roels. 
	\newblock The harmonic transvector algebra in two vector variables.
	\newblock {\em J. of Algebra}, 473(1): 247--282, 2017. 
	
	\bibitem{DBHS}
	H. De Bie, M. Holikova and P. Somberg. 
	\newblock Basic Aspects of Symplectic Clifford Analysis for the Symplectic Dirac Operator.
	\newblock {\em Adv. Appl. Cliff. Alg.}, 27: 1103--1132, 2017. 
	
	\bibitem{fulton2013representation}
	W.~Fulton and J.~Harris.
	\newblock {\em Representation theory: a first course}, volume 129.
	\newblock Springer Science \& Business Media, 2013.
	
	\bibitem{MR2252919}
	K.~Habermann and L.~Habermann.
	\newblock {\em Introduction to symplectic {D}irac operators}, volume 1887 of {\em Lecture Notes in Mathematics}.
	\newblock Springer-Verlag, Berlin, 2006.
	
	\bibitem{King2000}
	R.~C. King and B.~G. Wybourne.
	\newblock Analogies between finite-dimensional irreducible representations of
	$\mathsf{SO}(2n)$ and infinite-dimensional irreducible representations of $\mathfrak{sp}(2n)$.
	\newblock {\em J. Math. Phys.}, 41(7):5002--5019, 2000.
	
	\bibitem{dol}
	E.~O. Korman.
	\newblock Symplectic {D}olbeault operators on {K}\"{a}hler manifolds.
	\newblock {\em Ann. Global Anal. Geom.}, 44(3): 339--358, 2013.
	
	\bibitem{kos}
	B.~Kostant.
	\newblock Symplectic spinors.
	\newblock In {\em Symposia {M}athematica, {V}ol. {XIV} ({C}onvegno di
		{G}eometria {S}implettica e {F}isica {M}atematica \& {C}onvegno di {T}eoria
		{G}eometrica dell'{I}ntegrazione e {V}ariet\`a {M}inimali, {INDAM}, {R}ome,
		1973)}, 139--152. Academic Press, London, 1974.
	
	\bibitem{Mi}
	J. Mickelsson. 
	\newblock Step algebras for semi-simple subalgebras of Lie algebras. 
	\newblock {\em Rep. Math. Phys.} 4: 307--318, 1973. 
	
	\bibitem{zhelobenko1973compact}
	D.~Zhelobenko.
	\newblock {\em Compact Lie groups and their representations}. Vol. 40. American Mathematical Soc., 1973.
	
	
	\bibitem{Zh}
	D.P. Zhelobenko. 
	\newblock {\em Transvector algebras in representation theory and dynamic symmetry}
	\newblock Group-Theoretical Methods in Physics, vol. II, 71--93, Utrecht, Yurmala VNU Sci. Press,  1985.
	
\end{thebibliography}

\end{document}